\documentclass[12pt]{amsart}
\usepackage{amscd,amsmath,amsthm,amssymb}
\usepackage{mathtools}
\usepackage[margin=2cm]{geometry}
\usepackage{epsfig}
\usepackage{rawfonts}
\usepackage{enumerate}
\usepackage{graphics}
\usepackage{multirow}
\usepackage{xspace}
\usepackage{graphicx}
\usepackage{mathrsfs}
\usepackage{amsmath}
\usepackage{amsfonts}
\usepackage{amssymb}
\usepackage{amsthm}
\usepackage{graphicx}
\usepackage{booktabs}
\usepackage{caption}
\usepackage{listings}
\usepackage{setspace}
\usepackage[mathscr]{eucal}
\usepackage{pgfplots}
\usepackage{hyperref}
\usepackage{wrapfig}
\usepackage{floatflt,epsfig}
\usepackage{ dsfont }
\usepackage{amscd}
\usepackage{tikz-cd}
\usepackage{fancyhdr}
\usepackage[all]{xy}
\usepackage{latexsym}
\usepackage{amscd}
\usepackage{pifont}
\usepackage{subfig}
\usepackage{easyReview}
\usepackage{subfig}
\usepackage{pstricks-add}
\usepackage{pgf,tikz,pgfplots}
\pgfplotsset{compat=1.15}
\usepackage{mathrsfs}
\usetikzlibrary{arrows}
\usetikzlibrary[patterns]
 \usepackage[normalem]{ulem}
 \usepackage{multicol}

\renewcommand{\qedsymbol}{$\square$}

\def\opn#1#2{\def#1{\operatorname{#2}}} 
\opn\chara{char} \opn\length{\ell} \opn\pd{pd} \opn\rk{rk}
\opn\projdim{proj\,dim} \opn\injdim{inj\,dim} \opn\rank{rank}
\opn\depth{depth} \opn\grade{grade} \opn\height{height}
\opn\embdim{emb\,dim} \opn\codim{codim}

\opn\Tr{Tr} \opn\bigrank{big\,rank}
\opn\superheight{superheight}\opn\lcm{lcm}
\opn\trdeg{tr\,deg}
	\opn\reg{reg} \opn\lreg{lreg} \opn\ini{in} \opn\lpd{lpd}
	\opn\size{size} \opn\sdepth{sdepth}
	\opn\link{link}\opn\fdepth{fdepth}\opn\lex{lex}\opn\dist{dist}
	\opn\div{div} \opn\Div{Div} \opn\cl{cl} \opn\Cl{Cl}
	\opn\Spec{Spec} \opn\Supp{Supp} \opn\supp{supp} \opn\Sing{Sing}
	\opn\Ass{Ass} \opn\Min{Min}\opn\Mon{Mon}
	\opn\Ann{Ann} \opn\Rad{Rad} \opn\Soc{Soc}
	\opn\Im{Im} \opn\Ker{Ker} \opn\Coker{Coker} \opn\Am{Am}
	\opn\Hom{Hom} \opn\Tor{Tor} \opn\Ext{Ext} \opn\End{End}
	\opn\Aut{Aut} \opn\id{id}
	
	\opn\nat{nat}
	\opn\pff{pf}
	\opn\Pf{Pf} \opn\GL{GL} \opn\SL{SL} \opn\mod{mod} \opn\ord{ord}
	\opn\Gin{Gin} \opn\Hilb{Hilb}\opn\sort{sort}
	\opn\aff{aff} \opn
	\con{conv} \opn\relint{relint} \opn\st{st}
	\opn\lk{lk} \opn\cn{cn} \opn\core{core} \opn\vol{vol}
	\opn\link{link} \opn\star{star}\opn\lex{lex}\opn\set{set}
	\opn\gr{gr}
	
	\def\pot#1#2{#1[\kern-0.28ex[#2]\kern-0.28ex]}

	\opn\dirlim{\underrightarrow{\lim}}
	\opn\inivlim{\underleftarrow{\lim}}
	\let\to=\rightarrow
	
	\def\Implies{\ifmmode\Longrightarrow \else
		\unskip${}\Longrightarrow{}$\ignorespaces\fi}
	\def\implies{\ifmmode\Rightarrow \else
		\unskip${}\Rightarrow{}$\ignorespaces\fi}
	\def\iff{\ifmmode\Longleftrightarrow \else
		\unskip${}\Longleftrightarrow{}$\ignorespaces\fi}

	\let\:=\colon
	\let\epsilon\varepsilon
	\let\kappa=\varkappa
	\def\qed{\ifhmode\textqed\fi
		\ifmmode\ifinner\quad\qedsymbol\else\dispqed\fi\fi}
	\def\textqed{\unskip\nobreak\penalty50
		\hskip2em\hbox{}\nobreak\hfil\qedsymbol
		\parfillskip=0pt \finalhyphendemerits=0}
	\def\dispqed{\rlap{\qquad\qedsymbol}}
	
	\opn\dis{dis}
	\def\pnt{{\raise0.5mm\hbox{\large\bf.}}}
	
	\opn\Lex{Lex}
	
        \newtheorem{Theorem}{Theorem}[section]
	\newtheorem{Lemma}[Theorem]{Lemma}
	\newtheorem{Corollary}[Theorem]{Corollary}
	
	\newtheorem{Remark}[Theorem]{Remark}
	
	\newtheorem{Example}[Theorem]{Example}

	\newtheorem{Question}[Theorem]{Question}

       \newtheorem{Setup}[Theorem]{Setup}
       \newtheorem{Construction}[Theorem]{Construction}

\begin{document}

        \title[Binomial ideals]{Prime and Cohen--Macaulay binomial ideals with linear resolution}

\author[T.~Hibi]{Takayuki Hibi}
\address[Takayuki Hibi]
{Department of Pure and Applied Mathematics, 
Graduate School of Information Science and Technology, 
Osaka University, 
Suita, Osaka 565-0871, Japan}
\email{hibi@math.sci.osaka-u.ac.jp}

\author[A. A. Qureshi]{Ayesha Asloob Qureshi}      
   \address[Ayesha Asloob Qureshi]{Sabanci University, Faculty of Engineering and Natural Sciences, Orta Mahalle, Tuzla 34956, Istanbul, Turkey}	
\email{aqureshi@sabanciuniv.edu, ayesha.asloob@sabanciuniv.edu}

\author[S.~Saeedi~Madani]{Sara Saeedi Madani}
\address[Sara Saeedi Madani]
{Department of Mathematics and Computer Science, Amirkabir University of Technology, Tehran, Iran, and School of Mathematics, Institute for Research in Fundamental Sciences, Tehran, Iran} 
\email{sarasaeedi@aut.ac.ir, sarasaeedim@gmail.com}

	\keywords{Prime ideal, Cohen--Macaulay ring, binomial ideal, linear resolution}
	
    \subjclass[2020]{13H10, 13P10}
    
	\thanks{} 

        \maketitle
      
\begin{abstract}
Let $I$ be an ideal of a polynomial ring $S$ over a field $K$ for which $I$ is minimally generated by at least two quadratic binomials.  Suppose that (i) $I$ is prime, (ii) $S/I$ is Cohen--Macaulay and (iii) $I$ has linear resolution.  The question whether $I$ is equal to the ideal of $2$-minors of a $(2 \times n)$-matrix of variables is mainly studied.
\end{abstract}

\section*{Introduction}
Ideals generated by quadratic binomials have been studied for quarter century by a huge number of research papers and, together with monomial ideals and binomial ideals, belong to current trends on combinatorics and commutative algebra. Historically, one of the most important binomial ideals is the ideal $J$ of $2$-minors of a $(2 \times n)$-matrix of variables, viz., 
\begin{eqnarray}
\label{determinantal}
    J=(x_iy_j - x_jy_i : 1 \leq i < j \leq n).
\end{eqnarray}
Let $R=K[x_1, \ldots, x_n,y_1 \ldots, y_n]$ denote the polynomial ring in $2n$ variables over a field $K$. It is well known \cite{BV} that (i) $J$ is prime, (ii) $R/J$ is Cohen--Macaulay and (iii) $J$ has linear resolution.   It was shown in  \cite{EHH} that, in the class of binomial ideals arising from finite distributive lattices \cite{Hibi}, simple polyominoes  \cite{Q} and finite graphs \cite{HHHKR, Sara}, the ideals satisfying (i), (ii) and (iii) are exactly the ideal $J$ in (\ref{determinantal}). The observation naturally brings us to propose the following   

\begin{Question}
    \label{main question}
Let $I$ be an ideal of a polynomial ring $S$ over a field for which $I$ is minimally generated by at least two squarefree quadratic binomials.  Suppose that (i) $I$ is prime, (ii) $S/I$ is Cohen--Macaulay and (iii) $I$ has linear resolution.  Is it true that $I$ is equal to the ideal $J$ in (\ref{determinantal})?  
\end{Question}

If we remove the assumption of squarefreeness in Question~\ref{main question}, then the ideal of $2$-minors of a $3 \times 3$ symmetric matrix of variables and the ideal of $2$-minors of a $2 \times n$ Hankel matrix of variables, provides counterexamples, for example see \cite{BCV,CMVS}.  Now, considering these examples, it is reasonable to propose the following 

\begin{Question}
   \label{second question}
Let $I$ be an ideal of a polynomial ring $S$ over a field for which $I$ is minimally generated by at least two quadratic binomials.  Suppose that (i) $I$ is prime, (ii) $S/I$ is Cohen--Macaulay and (iii) $I$ has linear resolution.  Is it true that $I$ is the ideal of all 2-minors of a matrix of (not necessarily distinct) variables?  
\end{Question}

Furthermore, 

\begin{Question}
   \label{third question}
Let $X$ be a $(2 \times n)$--matrix of (not necessarily distinct) variables belonging to a polynomial ring $S$ over a field and $I$ the ideal of $S$ generated by all $2$-minors of $X$.  We call $X$ a {\em PCML} matrix if (i) $I$ is prime, (ii) $S/I$ is Cohen--Macaulay and (iii) $I$ has linear resolution.  Is it possible to classify all PCML matrices? 
\end{Question}

In the present paper, in Section $1$, we show that the answer of Question \ref{main question} and Question~\ref{second question} is ``yes'', when $I$ is minimally generated by three quadratic (squarefree) binomials.  In Section $2$, we study some general properties of binomial ideals satisfying conditions of Question~\ref{second question} which have a squarefree quadratic initial ideal. In Section~\ref{mu=6}, we focus on Question~\ref{main question} for ideals minimally generated by six squarefree quadratic binomials which also admit a squarefree quadratic initial ideal.  
On the other hand, we will not address Question~\ref{third question}, which is reserved for our forthcoming study.  Question \ref{third question} is related with \cite[Conjecture 3.3]{HH2012}.  


\section{Binomial ideals generated by three quadratic (squarefree) binomials}

The first step to study Question \ref{main question} and Question~\ref{second question} is the following

\begin{Theorem}
    \label{ayesha}
Let $I$ be an ideal of a polynomial ring $S$ for which $I$ is minimally generated by three quadratic binomials.  Suppose that (i) $I$ is prime, (ii) $S/I$ is Cohen--Macaulay and (iii) $I$ has linear resolution. Then $I$ is generated by all 2--minors of a matrix of (not necessarily distinct) variables.  

If in addition $I$ is minimally generated by three quadratic squarefree binomials, then $I$ is equal to the ideal $J$ in  (\ref{determinantal}) with $n=3$. In other words,
	\[
I=(x_1y_2-x_2y_1,\; x_1y_3-x_3y_1,\; x_2y_3-x_3y_2)
\subset K[x_1,x_2,x_3,y_1,y_2,y_3].
\]
\end{Theorem}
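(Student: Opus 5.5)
The plan is to first pin down the shape of the minimal free resolution, and only then use the binomial structure. Write $I=(f_1,f_2,f_3)$ with each $f_i$ a quadratic binomial.

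\emph{Step 1: the height of $I$.} Since $I$ is prime, $\height I\in\{1,2,3\}$.
\begin{itemize}
\item If $\height I=1$, then $I$ is a height-one prime in the UFD $S$, hence principal. This contradicts the fact that $I$ needs three generators.
\item If $\height I=3$, then $S/I$ is Cohen--Macaulay of codimension $3$ with three generators, so $f_1,f_2,f_3$ form a regular sequence. The resolution is then the Koszul complex, whose second module is $S(-4)^3$, so it is not linear.
\end{itemize}
Hence $\height I=2$. By Cohen--Macaulayness, $\pd S/I=2$. Linearity then forces the minimal resolution
\[
0\to S(-3)^2\xrightarrow{\;M\;} S(-2)^3\to I\to 0 ,
\]
with $M$ a $3\times 2$ matrix of linear forms. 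By the Hilbert--Burch theorem, $I$ is generated by the $2$-minors of $M$, or equivalently of the $2\times 3$ matrix $M^{T}$, up to a unit.

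\emph{Step 2: making the entries of $M$ variables.} This is the step I expect to be the main obstacle. I would exploit the multigrading attached to the binomials. Let $L\subset\mathbb Z^N$ be the lattice generated by the exponent differences $a_i-b_i$, where $f_i=x^{a_i}-c_ix^{b_i}$. Then $I$ is homogeneous for the $\mathbb Z^N/L$-grading. The minimal syzygy module is therefore multigraded, so we may choose the two linear syzygy columns of $M$ to be multihomogeneous.

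A multihomogeneous linear syzygy $\sum \ell_i f_i=0$ has each $\ell_i$ a linear combination of variables lying in a single graded piece of $S_1$. I would then compare monomials in $\sum\ell_if_i=0$. Each monomial $x_k x^{a_i}$ must cancel against some $x_{k'}x^{b_j}$ or $x_{k'}x^{a_j}$. Using this, together with primality (for instance, $I$ contains no monomial and no product of linear forms), I would show that after a change of basis of the syzygies and rescaling, each $\ell_i$ is a scalar multiple of a single variable.

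The case that needs care is when several variables share the same degree modulo $L$. In that case I would use the minimality of the generators and the fact that $I$ is generated by binomials to rule out genuinely mixed linear forms. Once all entries are scalar multiples of variables, rescaling the variables over $K$ (no roots are needed, since each variable is rescaled by a scalar) turns the $2$-minors into exact $2$-minors of a matrix of variables. This proves the first assertion.

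\emph{Step 3: the squarefree case.} Now $I$ is generated by the $2$-minors of
\[
\begin{pmatrix} x_1&x_2&x_3\\ y_1&y_2&y_3\end{pmatrix}
\]
with possibly repeated variables, and I check that all six entries must be distinct.
\begin{itemize}
\item Suppose an entry is repeated diagonally across a minor, such as $x_1=y_2$ in $x_1y_2-x_2y_1$. Then that minor contains a square monomial, which contradicts squarefreeness.
\item Suppose two entries in one row coincide, say $x_1=x_2$. Then the minor $x_1(y_2-y_1)$ is reducible. Since $I$ is prime, it contains $x_1$ or $y_2-y_1$. This forces $I$ to contain a linear form, so fewer quadratic generators are needed. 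Alternatively, a variable repeated in a column makes a minor vanish, again contradicting minimality.
\item The remaining coincidences can be handled the same way, or shown directly to produce a square.
\end{itemize}
Thus the six entries are distinct variables. Since $I$ lives in $S$, we may restrict to $K[x_1,x_2,x_3,y_1,y_2,y_3]$, which gives $I=J$ with $n=3$.
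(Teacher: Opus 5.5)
\textbf{Gap 1: the core of Step 2 is only announced.} Your Steps 1 and 3 follow the paper. In Step 2 the paper simply invokes [PS, Comment~5.9(1)] to get monomial entries in the Hilbert--Burch matrix. You instead want to get them from the $\mathbb{Z}^N/L$-grading, which is a sound and more self-contained route, but the decisive step is missing. The delicate case you flag is to be handled by the wrong input: it is primality that settles it, and it shows that distinct variables always have distinct degrees modulo $L$.
\begin{itemize}
\item $I$ is prime and contains no variable, so $S/I$ embeds in its localization $T$ at all variables.
\item In $T$ one has $x^{a_i-b_i}=c_i$, hence $x^u\in K^*$ for every $u\in L$.
\item If $e_j-e_k\in L$ with $j\neq k$, this gives a nonzero linear form $x_j-cx_k\in I$, which is impossible since $I$ is generated in degree $2$.
\item Since $L$ lies in the hyperplane $\sum u_i=0$, the graded piece of degree $e_k+L$ is exactly $Kx_k$.
\end{itemize}
So once the generators and syzygies are chosen multihomogeneous, every entry of $M$ is some $c\,x_k$ or $0$. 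A zero entry would make some $f_i$ a monomial, so it cannot occur. No comparison of monomials is needed.

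\textbf{Gap 2: the final rescaling is false.} When a variable occurs in several entries, its coefficients cannot be absorbed independently. The row, column and variable scalings must satisfy a multiplicative compatibility condition, which may need roots in $K$. With arbitrary coefficients, as in your setup, the first assertion actually fails over $K=\mathbb{Q}$:
\[
I=I_2\begin{pmatrix} x&y&z\\ y&z&2x\end{pmatrix}=(xz-y^2,\;2x^2-yz,\;2xy-z^2)\subset\mathbb{Q}[x,y,z].
\]
\begin{itemize}
\item $I$ has height $2$, so it is Cohen--Macaulay with linear resolution $0\to S(-3)^2\to S(-2)^3\to I\to 0$.
\item $I$ is prime. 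It is the kernel of $x\mapsto s$, $y\mapsto\theta s$, $z\mapsto\theta^2 s$ into $\mathbb{Q}(\theta)[s]$ with $\theta^3=2$, because both quotients have Hilbert function $3$ in every positive degree.
\item Every $2$-minor of a matrix with entries in $\{x,y,z\}$ vanishes at $(1,1,1)$, but $2x^2-yz$ does not.
\end{itemize}
The paper's proof does not address this either: it passes directly from ``monomials'' to ``variables''.

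\textbf{What can be repaired.} The step goes through if all $c_i=1$. It also goes through if $K$ is algebraically closed: then $u\mapsto x^u\in T$ extends from $L$ to $\mathbb{Z}^N$, and rescaling the variables makes all $c_i=1$. In either case, $m-cm'\in I$ with $m\neq m'$ forces $c=1$. Scale the rows and columns of $M^T$ to $\begin{pmatrix}u_1&u_2&u_3\\ v_1&\alpha v_2&\beta v_3\end{pmatrix}$. The minors $\alpha u_1v_2-u_2v_1$ and $\beta u_1v_3-u_3v_1$ then force $\alpha=\beta=1$, since otherwise $I$ contains a monomial or a minimal generator vanishes.

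In the squarefree case the six entries are distinct, so the scalars can always be absorbed and Step 3 works over any field. One correction there: a variable repeated in a column makes a minor reducible, e.g.\ $x_1(y_2-x_2)$, not zero. The contradiction is simply that $I$, being generated in degree $2$, contains no nonzero linear form; it is not a count of generators.
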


\begin{proof}
We first show that $\height(I)=2$. Note that $\height(I)\neq 1$, since $I$ is prime and not principal.   
Suppose that $\height(I)=3$. As $I$ is minimally generated by three elements, it follows that $I$ is a complete intersection. Then, the minimal graded free resolution of $S/I$ is given by the Koszul complex which gives $\reg(S/I)=3$, a contradiction to the assumption that $I$ has linear resolution and $\reg(I)=2$. Thus $\height(I)\neq 3$, and we conclude that $\height(I)=2$. 
	
	Since $S/I$ is Cohen--Macaulay, we have $\depth(S/I)=\dim S-2.$	By Auslander--Buchsbaum formula, $\pd(S/I)=\dim S-\depth(S/I)=2$.  It follows that $I$ admits a linear minimal free resolution of the form
	\[
	0\longrightarrow S(-3)^r \xrightarrow{\ \Phi\ } S(-2)^3 \longrightarrow I \longrightarrow 0.
	\]
By  Hilbert--Burch theorem, \cite[Theorem 20.12]{Eisenbud}, we have $r=2$ and $I=aJ$  where $J$ is generated by the 2-minors of the $2\times 3$ matrix  representing $\Phi$, and $a\in S$ is a nonzero divisor. Since $I$ is prime, $a$ must be a unit  in $K$.  Moreover,  the entries of $\Phi$ are linear forms, as $I$ has linear resolution. 

 It only remains to show that the entries of $\Phi$ are distinct variables.  It follows from \cite[Comment 5.9 (1)]{PS}, that the minimal free resolution of $I$ is monomial, that is, the entries of $\Phi$ are monomials. Therefore, the entries of $\Phi$ are variables. If, in addition, $I$ is generated by quadratic squarefree binomials, then it follows that the entries of $\Phi$ are distinct. This completes the proof.
 \, \, \, \, 
\end{proof}

By virtue of \cite[Theorem 6.1 (ii)]{PS}, in Theorem \ref{ayesha-2}, one can drop the assumption that $I$ is Cohen--Macaulay.  In fact,

\begin{Theorem}
	\label{ayesha-2}
Let $I$ be an ideal of a polynomial ring $S$ for which $I$ is minimally generated by three quadratic binomials.  Suppose that (i) $I$ is prime and (ii) $I$ has linear resolution. Then $I$ is generated by all 2--minors of a matrix of (not necessarily distinct) variables.  

If in addition $I$ is minimally generated by three quadratic squarefree binomials, then $I$ is equal to the ideal $J$ in  (\ref{determinantal}) with $n=3$. In other words,
	\[
I=(x_1y_2-x_2y_1,\; x_1y_3-x_3y_1,\; x_2y_3-x_3y_2)
\subset K[x_1,x_2,x_3,y_1,y_2,y_3].
\]
\end{Theorem}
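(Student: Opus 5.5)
The plan is to reduce Theorem~\ref{ayesha-2} to Theorem~\ref{ayesha}. That means showing that, for an ideal $I$ minimally generated by three quadratic binomials, primality together with a linear resolution already forces $S/I$ to be Cohen--Macaulay. Once this is established, both assertions follow verbatim from Theorem~\ref{ayesha}.

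\textbf{Step 1: pin down the height and the shape of the resolution.} As in the proof of Theorem~\ref{ayesha}, $\height(I)\neq 1$, because $I$ is prime and not principal. Also $\height(I)\neq 3$: otherwise $I$ would be a complete intersection, its Koszul resolution would not be linear, and this contradicts (ii). Hence $\height(I)=2$.

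Since $I$ has a linear resolution and $\mu(I)=3$, the resolution has the form
\[
\cdots \to S(-4)^{s}\to S(-3)^{r}\to S(-2)^3\to I\to 0.
\]
I will then use \cite[Theorem 6.1 (ii)]{PS}, which I expect to say the following: a prime (or more generally any) ideal generated by three quadratics with linear resolution has projective dimension of $S/I$ equal to $2$, or else it is one of a short explicit list of exceptions whose height differs from $2$.

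\textbf{Step 2: deduce Cohen--Macaulayness.} Granting $\pd(S/I)=2=\height(I)$, the inequalities
\[
\height(I)\le \pd(S/I)
\quad\text{and}\quad
\depth(S/I)\le \dim(S/I)
\]
become equalities via Auslander--Buchsbaum, so $S/I$ is Cohen--Macaulay. Theorem~\ref{ayesha} then gives that $I$ is generated by the $2$-minors of a matrix of variables, and in the squarefree case that it equals the ideal $J$ in (\ref{determinantal}) with $n=3$.

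\textbf{Main obstacle.} The hard part is Step 2's input, namely ruling out $\pd(S/I)=3$. A priori a linear resolution
\[
0\to S(-4)^{s}\to S(-3)^{r}\to S(-2)^3
\]
is numerically possible. Computing Betti numbers from the Hilbert series with $\beta_0=1$ and $\beta_1=3$, one finds the linear length-$3$ option is $3,3,1$, which is exactly the Koszul pattern on three linear forms times a shift. That pattern does not occur for quadrics, and so \cite{PS} is needed precisely to exclude the non-Cohen--Macaulay height-$2$ configurations, such as $I=\mathfrak p\cdot\ell$-type ideals, which primality excludes anyway.

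If the citation does not directly give this, I would argue by hand:
\begin{enumerate}[(a)]
\item A height-$2$ ideal with three quadratic generators and linear first syzygies of rank $2$ is determinantal by Hilbert--Burch, provided $\pd=2$.
\item If $\pd=3$, then $\Ext^3(S/I,S)\ne0$, which gives an embedded or lower-dimensional component. This contradicts primality, since a prime is unmixed, and the top-dimensional Ext would then be supported in codimension $3$ only if there is an associated prime of height $3$.
\end{enumerate}
This last deduction is delicate, because $\Ext^3\neq 0$ alone does not give an associated prime. That is exactly why I would rely on the classification in \cite{PS} rather than a general argument.
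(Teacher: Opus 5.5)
Your proposal is correct and follows the paper's proof. You get height $2$ by the argument of Theorem~\ref{ayesha}, then apply \cite[Theorem 6.1 (ii)]{PS}; for height $2$ and three minimal generators this directly gives that $S/I$ is Cohen--Macaulay with the Hilbert--Burch resolution, and the end of the proof of Theorem~\ref{ayesha} finishes.

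Your ``main obstacle'' discussion is inaccurate and should be dropped. The pattern $1,3,3,1$ (e.g.\ $I=\ell\cdot(x,y,z)$) has height $1$, not $2$, and in height $2$ the conditions $P(1)=P'(1)=0$ on the Hilbert series numerator $1-3t^2+rt^3-st^4$ force $s=0$. The citation can even be bypassed: the rank-$2$ syzygy module is generated by linear syzygies, so two independent ones form a $3\times 2$ linear matrix $M$. Its signed $2$-minors are a scalar multiple of the generators, because $\gcd=1$ in height $2$ and the degrees agree. Hence $I=I_2(M)$ has grade $2$, and Hilbert--Burch gives $\pd(S/I)=2$ directly; this repairs your item (a) and makes (b) unnecessary.
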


\begin{proof}

    As in the first paragraph of the proof of Theorem \ref{ayesha}, one has $\height(I)=2$.  It then follows from \cite[Theorem 6.1 (ii)]{PS} that $S/I$ is Cohen--Macaulay and its minimal free resolution is the Hilbert--Burch complex on the minimal generators of $I$. 
	%
    %
    More precisely, the  minimal free resolution of $I$ has the form
	\[
	0\longrightarrow S^2 \xrightarrow{\ \Phi\ } S^3 \longrightarrow I \longrightarrow 0.
	\]
%
%
    Now, the final paragraph in the proof of Theorem \ref{ayesha} remains valid without modification.
    \, \, \, 
\end{proof}

Let $I$ be a graded ideal in the polynomial ring $S$. We denote the number of elements in a homogeneous minimal system of generators of $I$ by $\mu(I)$. 

We conclude this section with the following remark. 

\begin{Remark}{\em
By \cite[Theorem 6.1]{PS}, a height~2 graded binomial ideal is Cohen--Macaulay if and only if it has at most three minimal generators. 
If $\mu(I)=2$, then $S/I$ is a complete intersection and its minimal free resolution is given by the Koszul complex, but this is linear only in the case when $I$ is generated by linear forms. 
If $\mu(I)=3$, then $S/I$ is Cohen--Macaulay but not a complete intersection, and when $I$ is minimally generated by three quadratic binomials, it has a linear resolution (see \cite[Comment~5.9(g)]{PS}). 
Therefore, if $I$ is a height~2 quadratic binomial ideal such that $S/I$ is Cohen--Macaulay and $I$ has a linear resolution, then $I$ is minimally generated by three elements.  
If, in addition, $I$ is prime, then by Theorem~\ref{ayesha} $I$ is generated by all 2-minors of a matrix of variables. Furthermore, if $I$ is prime and generated by quadratic squarefree binomials, then again by Theorem~\ref{ayesha} $I$ coincides with the ideal $J$ in (\ref{determinantal}) with $n=3$.
}
\end{Remark}
\section{Binomial ideals with a squarefree quadratic initial ideal}\label{squarefree initial ideal section}

Motivated by the fact that the determinantal ideal $J$ in  (\ref{determinantal}) has a squarefree quadratic initial ideal, in this section we consider binomial ideals satisfying the conditions stated in Question~\ref{second question} that also have squarefree quadratic initial ideals. First we observe how the interesting properties of being Cohen--Macaulay and of having linear resolution for a quadratic binomial ideal and its squarefree quadratic initial ideal are related.     

\begin{Lemma}\label{equivalency of CM+linear}
Let $I$ be a quadratic binomial ideal in a polynomial ring $S$ with a squarefree quadratic initial ideal $\ini_< (I)$ with respect to some monomial order $<$. Then, the following statements hold:
\begin{enumerate}
    \item $S/I$ is Cohen-Macaulay if and only if $S/\ini_< (I)$ is Cohen-Macaulay.
    \item $I$ has linear resolution if and only if $\ini_< (I)$ has linear resolution.
    \item If $\ini_< (I)$ has linear resolution, then the graded Betti numbers of $I$ and $\ini_< (I)$ coincide, and in particular $\mu(I)=\mu(\ini_< (I))$. 
\end{enumerate}
\end{Lemma}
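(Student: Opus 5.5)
The key tool is the Conca--Varbaro theorem (squarefree Gröbner degenerations): if $\ini_<(I)$ is squarefree, then for all $i,j$ one has $\dim_K H^i_{\mathfrak m}(S/I)_j=\dim_K H^i_{\mathfrak m}(S/\ini_<(I))_j$. In particular $\depth$, $\dim$ and $\reg$ of $S/I$ and $S/\ini_<(I)$ agree, and the extremal Betti numbers coincide. I would also use the standard upper semicontinuity of Betti numbers under Gröbner degeneration: $\beta_{i,j}(I)\le\beta_{i,j}(\ini_<(I))$ for all $i,j$. This holds because $I$ is a flat deformation of $\ini_<(I)$ via a weight vector, or alternatively by the usual Gröbner/consecutive-cancellation argument.

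For (1), $\dim S/I=\dim S/\ini_<(I)$ holds in general, since the Hilbert functions agree. Conca--Varbaro gives $\depth S/I=\depth S/\ini_<(I)$. Hence one ring is Cohen--Macaulay exactly when the other is.

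For (2), both ideals are generated in degree $2$. For $I$ this is the hypothesis; for $\ini_<(I)$ it holds because $\ini_<(I)$ is quadratic. A graded ideal generated in degree $2$ has linear resolution if and only if its regularity equals $2$. By Conca--Varbaro, $\reg I=\reg S/I+1=\reg S/\ini_<(I)+1=\reg \ini_<(I)$. So $I$ has linear resolution iff $\ini_<(I)$ does. I expect the main subtlety to be checking that the invoked theorem really yields equality of regularity. This follows from the local cohomology formulation: regularity is read off from the top nonvanishing degrees of the $H^i_{\mathfrak m}$.

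For (3), suppose $\ini_<(I)$ has linear resolution. By (2), $I$ has one too. Then for both ideals $\beta_{i,j}=0$ unless $j=i+2$. The Hilbert series are equal, since $S/I$ and $S/\ini_<(I)$ have the same Hilbert function. For an ideal with a $2$-linear resolution, the Hilbert series numerator equals $\sum_i(-1)^i\beta_{i,i+2}t^{i+2}$. There is no cancellation, because each degree $i+2$ appears for only one homological index. The numerator therefore determines all $\beta_{i,i+2}$, so the graded Betti numbers of $I$ and $\ini_<(I)$ coincide. In particular $\mu(I)=\beta_{0,2}(I)=\beta_{0,2}(\ini_<(I))=\mu(\ini_<(I))$. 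This step is elementary; it does not even need semicontinuity.
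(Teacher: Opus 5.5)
Your proof is correct. For (1) and (2) you follow the paper's line: both rest on Conca--Varbaro. The only cosmetic difference is that you invoke it in both directions, via equality of depth and of regularity. The paper instead uses the classical facts that Cohen--Macaulayness and linearity of the resolution pass from $\ini_<(I)$ to $I$, and reserves Conca--Varbaro (equality of $\pd$ and $\reg$) for the converse implications.

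The genuinely different step is (3). The paper appeals to Peeva's theorem that the minimal resolution of $I$ is obtained from that of $\ini_<(I)$ by consecutive cancellations, and observes that a linear resolution leaves no room for cancellation. You instead first get linearity of $I$ from (2), then read off the Betti numbers from the common Hilbert series. For a $2$-linear resolution each power $t^{i+2}$ in the numerator carries exactly one Betti number, so the numerator determines them all.

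Your route is more elementary: beyond linearity of $I$ it needs only equality of Hilbert functions, which holds for every Gröbner degeneration. The cost is that linearity of $I$ must be supplied as a separate input, whereas the paper's cancellation argument yields linearity of $I$ and equality of Betti numbers in one stroke. For that input you only need the inequality $\beta_{i,j}(I)\le\beta_{i,j}(\ini_<(I))$ you already quoted, not Conca--Varbaro. So, like the paper's argument, your proof of (3) does not use squarefreeness at all.
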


\begin{proof}
(1) If $S/\ini_< (I)$ is Cohen--Macaulay, then $S/I$ is Cohen--Macaulay as well \cite[Corollary~3.3.5]{HHgtm260}. Conversely, assume that $S/I$ is Cohen--Macaulay. Thus, $\pd(S/I)=\height(I)$. Since $\ini_< (I)$ is squarefree, it follows from \cite[Corollary~2.7]{CV} that $\pd (S/I)=\pd (S/\ini_< (I))$. Hence, $\pd(S/\ini_< (I))=\height(I)$. On the other hand, one has $\dim(S/I)=\dim(S/\ini_< (I))$ \cite[Theorem~3.3.4]{HHgtm260}, and hence $\height(I)=\height(\ini_< (I))$. Therefore, 
$\pd(S/\ini_< (I))=\height(\ini_< (I))$, and hence $S/\ini_< (I)$ is also Cohen--Macaulay. Thus, (1) follows.    

(2) If $\ini_< (I)$ has linear resolution, then $I$ has linear resolution as well \cite[Theorem~3.3.4]{HHgtm260}. Conversely, suppose that $I$ has linear resolution. Since $\ini_< (I)$ is squarefree, it follows from \cite[Corollary~2.7]{CV} that $\reg (I)=\reg (\ini_< (I))$. Thus, $\reg(\ini_< (I))=2$, since $I$ is quadratic and has linear resolution. Since $\ini_< (I)$ is generated in degree~2, it follows that $\ini_< (I)$ has linear resolution, and hence (2) follows.   

(3) Note that the minimal graded free resolution of $I$ is obtained by a sequence of consecutive cancellations from the minimal graded free resolution of $\ini_< (I)$ \cite[Theorem~22.12]{P}. Thus, if $\ini_< (I)$ has linear resolution, then there is no chance for any cancellation. Therefore, all the graded Betti numbers of $I$ and $\ini_< (I)$ coincide, and hence $\mu(I)=\beta_0(I)=\beta_0(\ini_< (I))=\mu(\ini_< (I))$. 
\end{proof}


Let $G$ be a finite simple graph with the vertex set $V(G)$ and the edge set $E(G)$. The complementary graph $\overline{G}$ is a simple graph with the same vertices as $G$ whose edges are exactly the non--edges of $G$. An \emph{induced subgraph} of $G$ on $W\subseteq V(G)$ is a graph on the vertex set $W$ whose edges are the edges of $G$ contained in $W$. A \emph{chordal graph} is a graph whose all induced cycles have length~3. A graph is called \emph{co--chordal} if its complementary graph is chordal. A subset $T\subseteq V(G)$ is called an \emph{independent} set if it does not contain any edges of $G$. The maximum size of an independent set in $G$ is called the \emph{independence number} of $G$ and we denote it by $\alpha(G)$. 

Also recall that the \emph{edge ideal} $I(G)$ of $G$ in a polynomial ring over a field $K$ is the quadratic squarefree monomial ideal whose unique minimal monomial generating set is corresponding to the edge set of $G$. 

\begin{Lemma}\label{initial ideal as edge ideal}
Let $I$ be an ideal in a polynomial ring $S$ minimally generated  by at least two quadratic binomials. Suppose that (i) $I$ is prime, (ii) $I$ has a squarefree quadratic initial ideal with respect to a monomial order $<$ and (iii) $I$ has linear resolution. Then, there is a finite simple graph $G$ whose vertex set is contained in the set of variables of $S$ such that 
 \begin{enumerate}
     \item $\ini_<(I)=I(G)$; 
     \item $S/I(G)$ is Cohen--Macaulay;
     \item $I(G)$ has linear resolution;
     \item $G$ is connected and co--chordal; 
     \item $|E(G)| = {V(G)-\alpha(G)+1 \choose 2}$. 
 \end{enumerate}
\end{Lemma}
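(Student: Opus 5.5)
The plan is to pass from $I$ to its initial ideal, read off the combinatorics of the resulting edge ideal, and then use the structure theory of edge ideals with linear resolution. Since $\ini_<(I)$ is squarefree and generated in degree $2$, it is the edge ideal $I(G)$ of a simple graph $G$. We take $V(G)$ to be the set of variables that actually occur in the generators, so $G$ has no isolated vertices. This gives (1). By Lemma~\ref{equivalency of CM+linear}(2), $I(G)$ has linear resolution, which is (3). Fröberg's theorem then says that $G$ is co--chordal. By Lemma~\ref{equivalency of CM+linear}(3), $|E(G)|=\mu(I(G))=\mu(I)\geq 2$.

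For connectedness, suppose $G$ were disconnected. Since $G$ has no isolated vertices, two different components would each contain an edge, giving edges $e,f$ with no edge of $G$ joining a vertex of $e$ to a vertex of $f$. The induced subgraph on the four vertices of $e\cup f$ is then $2K_2$. By Hochster's formula, restriction to these variables gives $\beta_{1,4}(I(G))\neq 0$, because the Koszul syzygy $e\cdot f$ cannot be cancelled. This contradicts linearity. Equivalently, $\overline{G}$ would contain an induced $4$-cycle, which is impossible for a chordal graph. Hence $G$ is connected, which completes (4).

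The main obstacle is (2), because Cohen--Macaulayness is not among the hypotheses; primality has to be used here. I would first apply the theorem of Kalkbrener--Sturmfels: the initial ideal of a prime ideal is equidimensional. Since $I(G)$ is radical, it follows that all minimal primes of $I(G)$ have the same height. Thus the Stanley--Reisner complex of $I(G)$, which is the independence complex of $G$ (the clique complex of the chordal graph $\overline{G}$), is pure. Clique complexes of chordal graphs are quasi-forests (Herzog--Hibi--Zheng) and are therefore (non-pure) shellable. A pure shellable complex is Cohen--Macaulay, so $S/I(G)$ is Cohen--Macaulay. By Lemma~\ref{equivalency of CM+linear}(1), $S/I$ is Cohen--Macaulay as well. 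Adjoining the variables outside $V(G)$ changes neither property, so it suffices to work in $K[V(G)]$.

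For (5), work in $T=K[V(G)]$ with $n=|V(G)|$. Here $h=\height I(G)=n-\alpha(G)$, since minimal vertex covers are complements of maximal independent sets. As $T/I(G)$ is Cohen--Macaulay with $2$-linear resolution, we reduce modulo a general linear regular sequence of length $\alpha(G)$, assuming $K$ is infinite or extending it. This yields an Artinian ring $A$ with embedding dimension $h$ in which the image of the ideal is generated by quadrics, so $\mathfrak m^2\subseteq$ image, while $\mathfrak m^3=0$ holds as well. In fact linearity of the resolution forces the $h$-polynomial to be $1+hz$, that is $\mathfrak m_A^2=0$. Hence the image of $I(G)$ equals all quadrics in $h$ variables, and its number of minimal generators is $\binom{h+1}{2}$. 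Since reduction by a regular sequence preserves minimal generators, $|E(G)|=\binom{n-\alpha(G)+1}{2}$.
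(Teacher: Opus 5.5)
Your arguments for (1), (3), (4) and (5) are sound. In particular, your reduction to an Artinian ring with $\mathfrak m_A^2=0$ is a correct self-contained proof of the edge count, which the paper simply quotes from Herzog--Rinaldo. But that count depends on (2), and (2) is where your argument breaks down.

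The claim that clique complexes of chordal graphs, being quasi-forests, are (non-pure) shellable is false. In a leaf order, the leaf $F_i$ is attached to $\langle F_1,\dots,F_{i-1}\rangle$ along the simplex on $F_i\cap F_j$, and that simplex can have any dimension, so a leaf order is not a shelling. More seriously, purity cannot be enough here. Take $G=C_4$ with edges $x_1x_2,x_2x_3,x_3x_4,x_4x_1$:
\begin{itemize}
\item $G$ is connected;
\item $\overline{G}=2K_2$ is chordal, so $I(G)=(x_1,x_3)(x_2,x_4)$ has linear resolution;
\item $I(G)=(x_1,x_3)\cap(x_2,x_4)$ is unmixed, so its independence complex is pure.
\end{itemize}
Yet that complex consists of two disjoint edges. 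A Cohen--Macaulay complex of positive dimension is connected, so $K[x_1,\dots,x_4]/I(G)$ is not Cohen--Macaulay. Thus equidimensionality of $\ini_<(I)$, combined with everything else you have at that stage, does not yield (2). The shellability results you may have in mind (Francisco--Van Tuyl, Woodroofe) concern independence complexes of \emph{chordal} graphs, i.e.\ the case where $G$ is chordal, not $\overline{G}$.

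To close the gap you must extract more from primality. The paper does this in three steps:
\begin{itemize}
\item a prime binomial ideal is toric (Eisenbud--Sturmfels);
\item by Sturmfels' theorem, the radical of any initial ideal of a toric ideal is the Stanley--Reisner ideal of a regular triangulation of the associated polytope, hence of a ball;
\item such a complex is Cohen--Macaulay (Reisner/Munkres), and since $\ini_<(I)$ is already squarefree, (2) follows at once.
\end{itemize}
Alternatively, you could use the full Kalkbrener--Sturmfels theorem: the initial complex of a prime is pure \emph{and connected in codimension one}, which rules out examples like $C_4$. You would then still have to prove that a pure, strongly connected clique complex of a chordal graph is shellable. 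One way is to take a maximum-weight spanning tree of the clique intersection graph, which is a clique tree. Strong connectivity forces adjacent cliques in this tree to share a codimension-one face. Ordering the cliques outward from a root is then a shelling, by the running intersection property. That argument is genuinely needed, and it is not the quasi-forest statement you quoted.
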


\begin{proof}
  By assumption~(ii) it is clear that there is a finite simple graph $G$ whose vertex set is contained in the set of variables of $S$ and we have~(1), namely $\ini_<(I)=I(G)$. 
  
  (2) Since a prime binomial ideal is a toric ideal \cite{ES} and since the Stanley--Reisner complex of the radical of an initial ideal of a toric ideal is homeomorphic to a ball \cite{Sturmfels}, it follows that the radical of an initial ideal of a toric ideal is Cohen--Macaulay \cite{Reisner}. In particular, a squarefree initial ideal of a prime binomial ideal is Cohen--Macaulay. Thus, $S/I(G)$ is Cohen--Macaulay. 

  (3) follows from Lemma~\ref{equivalency of CM+linear}. 

  (4) If $G$ has more than one nontrivial connected components, then $I(G)$ has relations of degree~4, 
  which is a contradiction to~(3). Thus, $G$ has exactly one nontrivial connected component. On the other hand, since isolated vertices have no effect on the generators of $I(G)$, we may consider $G$ without isolated vertices. Moreover, (3) together with Fr\"oberg theorem~\cite[Theorem~9.2.3]{HHgtm260} imply that $G$ is co--chordal.

  (5) follows from (2), (3) and \cite[Corollary~2]{HR}. 
   \, \, \, \, \, \, \, \, \, \, \, \, \, \, \, \, \, \, \, \, \, \, \, \, \, \, \, \,  \, \, \, \, 
\end{proof}

We would like to remark that according to ~\cite{HR} a finite simple graph $G$ for which conditions~(2) and~(3) in Lemma~\ref{initial ideal as edge ideal} hold is called a \emph{bi-Cohen--Macaulay} graph over the field $K$, or just bi-Cohen--Macaulay if it does not depend on the field.

\medskip
Note that it also follows from Lemma~\ref{initial ideal as edge ideal}~(2) and Lemma~\ref{equivalency of CM+linear}~(1) that a prime binomial ideal with a squarefree initial ideal is Cohen--Macaulay. Therefore, for such ideals we may replace ``Cohen--Macaulay" among the assumptions of Question~\ref{second question} by ``admitting a squarefree initial ideal". In the following, we discuss the possible number of minimal homogeneous generators. 

\begin{Theorem}\label{possible number of generators}
Let $I$ be an ideal in a polynomial ring $S$ minimally generated by at least two quadratic binomials. Suppose that (i) $I$ is prime, (ii) $I$ has a squarefree quadratic initial ideal and (iii) $I$ has linear resolution. Then
\[
\mu(I) = \mu(J) = {n\choose 2},
\]
where $J$ is a determinantal ideal of the form (\ref{determinantal}) for some $n\geq 3$. 
\end{Theorem}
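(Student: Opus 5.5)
Via Lemma~\ref{equivalency of CM+linear} and Lemma~\ref{initial ideal as edge ideal}, we reduce to a counting statement about the graph $G$ with $\ini_<(I)=I(G)$. By Lemma~\ref{equivalency of CM+linear}(3), since $\ini_<(I)$ has linear resolution, $\mu(I)=\mu(\ini_<(I))=\mu(I(G))=|E(G)|$. By Lemma~\ref{initial ideal as edge ideal}(5), $|E(G)|=\binom{|V(G)|-\alpha(G)+1}{2}$. Set $n=|V(G)|-\alpha(G)+1$; then $\mu(I)=\binom{n}{2}$, and $\mu(J)=\binom{n}{2}$ for the ideal $J$ of $2$-minors of a $2\times n$ matrix of variables. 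So it remains to show $n\ge 3$.

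Since $I$ is minimally generated by at least two quadratic binomials, $\mu(I)\ge 2$. The values $\binom{1}{2}=0$ and $\binom{2}{2}=1$ are excluded, and $n\ge 1$ because $\alpha(G)\le |V(G)|$. Hence $n\ge 3$.

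The only point that needs care is that (5) in Lemma~\ref{initial ideal as edge ideal} is applied to $G$ without isolated vertices. The proof of (4) there removes isolated vertices, and doing so changes neither $I(G)$ nor the quantity $|V(G)|-\alpha(G)$: removing an isolated vertex lowers $|V(G)|$ and $\alpha(G)$ by one each. So the count is consistent, and the formula applies to the connected co-chordal graph. I expect no real obstacle; the substance lies in the cited bi-Cohen--Macaulay edge count from \cite{HR}, which Lemma~\ref{initial ideal as edge ideal} already packages. Beyond that, the argument is only the bookkeeping that $\mu(I)=|E(G)|$ follows from the coincidence of Betti numbers in Lemma~\ref{equivalency of CM+linear}(3).
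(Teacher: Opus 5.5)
Your proposal is correct and follows essentially the same route as the paper: write $\ini_<(I)=I(G)$, use Lemma~\ref{equivalency of CM+linear}(3) to get $\mu(I)=|E(G)|$, apply Lemma~\ref{initial ideal as edge ideal}(5), and set $n=|V(G)|-\alpha(G)+1$, where $\mu(I)\ge 2$ forces $n\ge 3$. Your remark that deleting isolated vertices leaves $|V(G)|-\alpha(G)$ unchanged is correct, and it makes explicit a point the paper leaves implicit.
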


\begin{proof}
It follows from (\ref{determinantal}) that $\mu(J) = {n\choose 2}$ for any $n\geq 3$. Let $<$ be a monomial order such that $\ini_<(I)$ is a squarefree quadratic monomial ideal in $S$. By Lemma~\ref{initial ideal as edge ideal}, we have $\ini_<(I)=I(G)$ for a finite connected simple graph $G$ whose vertex set is contained in the set of variables of $S$. Then, by Lemma~\ref{equivalency of CM+linear}~(3) we have $\mu(\ini_<(I))=|E(G)|\geq 2$, since $\mu(I)\geq 2$. Therefore, it follows from Lemma~\ref{initial ideal as edge ideal}~(5) that 
\[
\mu(\ini_<(I))= {|V(G)|-\alpha(G)+1 \choose 2}.
\]
Let $n=|V(G)|-\alpha(G)+1$. Then, $n\geq 3$ and by Lemma~\ref{equivalency of CM+linear}~(3), it follows that $\mu(I)={n\choose 2}$, as desired.  
\, \, \, \, \, \, \, \, \, \, \, \, \, \, \, \, \, \, \, \, \, \, \, \, \, \, \, \, \, \, \, \, \, \, \, \, \, \, \, \, \, \, \, \, \, \, \, \, \, \, \, \, \, \, \, \, \, \, \, \, 
\end{proof}

\section{Ideals generated by six quadratic squarefree binomials with a squarefree quadratic initial ideal} \label{mu=6}

In Theorem~\ref{ayesha} we discussed a quadratic squarefree binomial ideal $I$ with $\mu(I)=3$ and showed that $I$ is just the determinantal ideal $J$ as in (\ref{determinantal}). According to Theorem~\ref{possible number of generators}, if in addition $I$ admits a quadratic squarefree initial ideal, then the next possible value for $\mu(I)$ is~6. To study this case, we begin with the following lemma and setup.

\begin{Lemma}\label{lem:sqfree}
Let $I$ be an ideal of $S$ generated by squarefree quadratic binomials. Then every quadratic binomial in $I$ is squarefree. 
\end{Lemma}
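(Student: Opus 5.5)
The argument works degree by degree. It uses only that $I$ is homogeneous and generated in degree $2$, and then looks at which monomials can occur in the degree-$2$ component $I_2$.

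First, let $f_1,\ldots,f_r$ be the given squarefree quadratic binomials generating $I$. Each $f_k$ is homogeneous of degree $2$. Hence $I$ is a graded ideal, and its degree-$2$ component is
\[
I_2=\sum_{k=1}^r S_0 f_k = K f_1+\cdots+K f_r .
\]
The reason is that any element $\sum g_k f_k$ of $I$ has degree-$2$ part $\sum (g_k)_0 f_k$, where $(g_k)_0\in K$ denotes the constant term of $g_k$. Every $f_k$ is a $K$-linear combination of squarefree monomials of degree $2$, that is, of monomials $x_ix_j$ with $i\neq j$. Consequently every element of $I_2$ is a $K$-linear combination of such monomials. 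In particular, no element of $I_2$ has a nonzero coefficient on any monomial $x_i^2$.

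Next, let $g=u-\lambda v$ be a quadratic binomial in $I$, where $u\neq v$ are monomials and $\lambda\in K$. Consider first the case $\deg u=\deg v=2$. Then $g$ is homogeneous of degree $2$, so $g\in I_2$. Since $u\neq v$, the two terms do not cancel, so both $u$ and $v$ occur in $g$ with nonzero coefficients. By the previous paragraph, both are squarefree.

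If one allows inhomogeneous binomials of degree $2$, with say $\deg u=2>\deg v$, then homogeneity of $I$ gives $u\in I_2$ as the top-degree component of $g$. This is impossible, because $u$ is a single monomial with nonzero coefficient and $u$ is either a square $x_i^2$ or a squarefree monomial $x_ix_j$. In the first case it contradicts the description of $I_2$ above. In the second case the element $u$ would still have to be squarefree, which is exactly what we want. The same reasoning applies to $v$ if it also has degree $2$. So in every case the quadratic monomials of $g$ are squarefree.

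There is no real obstacle here. The only point needing care is the identification $I_2=\operatorname{span}_K\{f_1,\ldots,f_r\}$, which holds because all generators have degree exactly $2$.
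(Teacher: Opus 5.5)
Your argument is correct, but it takes a different route from the paper. The paper uses a specialization map: it applies the $K$-algebra map $\pi\colon S\to K[x_i]$ that fixes $x_i$ and sends every other variable to $0$. Each term of each generator $f_j$ is a product of two distinct variables, so $\pi(f_j)=0$ and hence $\pi(I)=0$, whereas $\pi(x_i^2-m)=x_i^2\neq 0$. You instead use the grading. Because all generators are homogeneous of degree exactly $2$, you get $I_2=Kf_1+\cdots+Kf_r$, which lies in the $K$-span of the squarefree quadratic monomials, so no element of $I_2$ has a term $x_i^2$.

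Your version gives an explicit linear-algebra description of $I_2$ and handles an arbitrary coefficient $\lambda$ directly. The paper writes only the monic case $x_i^2-m$, though its argument adapts word for word. The paper's argument, on the other hand, does not use the grading at all, and it proves more: no element of $I$, in any degree, contains a pure power $x_i^k$ as a term. It would also work for generators of mixed degrees, as long as no term of any generator is a pure power of a variable.

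Your paragraph on inhomogeneous binomials is muddled. The claim ``This is impossible'' is not justified when $u$ is squarefree, since in general a monomial can lie in an ideal generated by binomials with arbitrary coefficients. The remark about $v$ having degree $2$ also contradicts your assumption $\deg v<2$. The paragraph is unnecessary in any case. Since $I$ is generated in degree $2$, you have $I_0=I_1=0$, so the lower-degree component $\lambda v$ must vanish. Hence every quadratic binomial in $I$ with two nonzero terms is homogeneous, and your degree-$2$ argument covers all cases.
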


\begin{proof}
Suppose $I$ is generated by squarefree quadratic binomials $f_1,\dots,f_r$, and that $x_i^2-m\in I$ for some quadratic monomial $m\neq x_i^2$. Then $x_i^2-m = \sum_{j=1}^r g_j f_j,$ for some  $g_1, \ldots, g_r\in S.$
Consider the $K$-algebra homomorphism $\pi:S\to K[x_i]$ with $\pi(x_i)=x_i$ and $\pi(x_\ell)=0$ for $\ell\neq i$. Each $f_j$ involves $x_i$ with degree at most $1$, so $\pi(f_j)=0$. Hence $0=\pi\!\left(\sum_j g_j f_j\right)=\pi(x_i^2-m)=x_i^2,$
a contradiction. Thus no such binomial lies in $I$. \hspace{8cm}
\end{proof}

We next describe the framework for the case $\mu(I)=6$.

 \begin{Setup}\label{setup}{\em 
    Let $I \subset S$ be a binomial prime ideal minimally generated by six quadratic squarefree binomials. 
    Suppose there exists a monomial order $<$ such that the initial ideal $\ini_<(I)$ of $I$ with respect to $<$ is generated by quadratic squarefree monomials. 
    Let $A$ denote the reduced Gröbner basis of $I$ with respect to $<$. Then $\ini_<(I)$ can be viewed as the edge ideal of a connected graph $G$, where the vertices of $G$ are identified with some variables of $S$, as mentioned in Lemma~\ref{initial ideal as edge ideal}. 
 
   Assume that $x_1x_2, x_2x_3, x_1x_3 \in \ini_<(I) $, that is, $\{x_1, x_2\}, \{x_1, x_3\}, \{x_2, x_3\} \in E(G)$. 
    It follows that 
    \[
    B_{12} = x_1x_2 - ab, \quad B_{23} = x_2x_3 - cd, \quad B_{13} = x_1x_3 - ef
    \]
    belong to $A$ for some variables $a,b,c,d,e,f \in S$ (not necessarily distinct). Primality of $I$ implies 
    \begin{equation}\label{eq:notin}
      a,b \notin \{x_1, x_2\}, \quad c,d \notin \{x_2, x_3\}, \quad e,f \notin \{x_1, x_3\}.  
    \end{equation}
 Since $B_{12}, B_{23}, B_{13}$ are part of the reduced Gröbner basis of $I$, it follows that 
    \begin{equation}\label{eq:red1}
    ab, \; cd, \; ef \notin \ini_<(I),
    \end{equation}
    and Lemma~\ref{lem:sqfree} gives
    \begin{equation}\label{eq:neq}
    a\neq b, \; c\neq d, \; e\neq f.
    \end{equation}
    Moreover, each of the $S$-pairs
    \[
    S(B_{12}, B_{23}) = x_1cd - x_3ab, \quad 
    S(B_{12}, B_{13}) = x_2ef - x_3ab, \quad 
    S(B_{23}, B_{13}) = x_2ef - x_1cd
    \]
    reduces to $0$ with respect to $A$. Note that the monomials $ab, cd, ef$ are pairwise distinct. Indeed, for example, if $ab=cd$, then $B_{12}-B_{23}= x_2(x_1-x_3) \in I$, a contradiction.
    
    Note that at least two of the monomials $x_1cd, x_2ef, x_3ab$ belong to $\ini_<(I)$, because each of the $S$-pairs given above is in $I$.  Without loss of generality, assume that $x_1cd, x_2ef \in \ini_<(I)$. 
    Since $cd, ef \notin \ini_<(I)$, we may further assume that 
    \begin{equation}\label{eq:red2}
    x_1c, \quad x_2e \in \ini_<(I), \quad \text{with } x_1 \neq c, \; x_2 \neq e,
    \end{equation}
    because $\ini_<(I)$ is squarefree.  

For any vertex $v$ in $G$, we denote the neighborhood of $v$ in $G$ by $N_G(v)$, and we set $\deg_G(v)=|N_G(v)|$}. 
\end{Setup}

\begin{figure}[h]
\centering
\begin{tikzpicture}[scale=1]
  \tikzset{v/.style={circle, fill=black, inner sep=1.6pt}}

  \begin{scope}[shift={(0,0)}]
    \node[v] (a) at (0,0) {};
    \node[v] (b) at (1.2,0) {};
    \node[v] (c) at (1.2,1.2) {};
    \node[v] (d) at (0,1.2) {};
    \draw (a)--(b)--(c)--(d)--(a);
    \draw (a)--(c) (b)--(d);
    \node[draw=none, fill=none] at (0.6,-0.45) {$G_1$};
  \end{scope}

  \begin{scope}[shift={(3.2,0)}]
    \node[v] (b) at (0,0) {};
    \node[v] (c) at (0,1.2) {};
    \node[v] (a) at (-1.0,0.6) {};
    \node[v] (d) at (1.0,0.6) {};
    \node[v] (e) at (2.0,0.6) {};
    \draw (a)--(b)--(c)--(a);
    \draw (b)--(d)--(c);
    \draw (d)--(e);
    \node[draw=none, fill=none] at (0.5,-0.5) {$G_2$};
  \end{scope}

  \begin{scope}[shift={(6.5,0)}]
    \node[v] (d) at (0,0) {};
    \node[v] (c) at (0,1.2) {};
    \node[v] (b) at (1.2,1.2) {};
    \node[v] (a) at (1.2,0) {};
    \draw (d)--(c)--(b)--(a)--(d);
    \node[v] (e) at (2.0,0.6) {};
    \draw (a)--(b)--(e)--(a);
    \node[draw=none, fill=none] at (0.6,-0.45) {$G_3$};
  \end{scope}

  \begin{scope}[shift={(10.5,0)}]
    \node[v] (p)  at (0,0) {};
    \node[v] (q)  at (1.2,0) {};
    \node[v] (r)  at (0.6,1.04) {};
    \draw (p)--(q)--(r)--(p);
    \node[v] (p1) at (-0.8,0) {};
    \node[v] (q1) at (2.0,0) {};
    \node[v] (r1) at (0.6,1.9) {};
    \draw (p)--(p1);
    \draw (q)--(q1);
    \draw (r)--(r1);
    \node[draw=none, fill=none] at (0.6,-0.45) {$G_4$};
  \end{scope}
\end{tikzpicture}
\caption{Graphs $G_1, G_2, G_3,$ and $G_4$}
\label{badgraphs}
\end{figure}

\begin{Lemma}\label{lemma:G1G2}
With the notation introduced in Setup~\ref{setup}, the graph $G$ is not isomorphic to either of the graphs $G_1$ or $G_2$ shown in Figure~\ref{badgraphs}.
\end{Lemma}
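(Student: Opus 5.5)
The plan is to exploit the Setup with the triangle $\{x_1,x_2,x_3\}$ placed suitably inside $G$, and to derive a contradiction from one primality trick. Suppose that in (\ref{eq:red2}) the two new neighbours coincide, i.e.\ $c=e$. Then
\[
S(B_{23},B_{13}) = x_2ef-x_1cd = c\,(x_2f-x_1d) \in I .
\]
Since $I$ is prime and contains no variable (it is minimally generated by quadratic binomials), we get $x_2f-x_1d\in I$. This binomial is nonzero: $x_2f=x_1d$ would force $d=x_2$, contradicting (\ref{eq:notin}). So one of $x_2f$, $x_1d$ lies in $\ini_<(I)=I(G)$, i.e.\ $\{x_2,f\}\in E(G)$ or $\{x_1,d\}\in E(G)$. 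The aim in each case is to show, using (\ref{eq:notin}), (\ref{eq:red1}) and (\ref{eq:neq}), that neither is an edge.

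\emph{The case $G\cong G_1=K_4$.} Write $V(G)=\{x_1,x_2,x_3,x_4\}$. By (\ref{eq:red2}), $c\in N_G(x_1)$, and by (\ref{eq:notin}), $c\notin\{x_2,x_3\}$; hence $c=x_4$. In the same way $e\in N_G(x_2)\setminus\{x_1,x_3\}$ gives $e=x_4$. Because $G$ is complete, $cd\notin I(G)$ with $d\neq c$ forces $d\notin V(G)$, and likewise $f\notin V(G)$. Then neither $x_2f$ nor $x_1d$ is an edge, which contradicts the primality trick above.

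\emph{The case $G\cong G_2$.} Label $G_2$ as in Figure~\ref{badgraphs}: the triangle $\{a,b,c\}$, edges $bd$, $dc$, and the pendant edge $de$. To avoid clashing with the variables $a,\dots,f$ of the Setup, I will call these vertices $v_a,\dots,v_e$. The only triangles are $\{v_a,v_b,v_c\}$ and $\{v_b,v_c,v_d\}$, so $\{x_1,x_2,x_3\}$ is one of them. The requirement in (\ref{eq:red2}) that $x_1$ and $x_2$ have neighbours outside $\{x_1,x_2,x_3\}$ pins down the possibilities.
\begin{itemize}
\item For $\{v_a,v_b,v_c\}$: $x_1,x_2\in\{v_b,v_c\}$, so $c=e=v_d$ and $x_3=v_a$.
\item For $\{v_b,v_c,v_d\}$ with $\{x_1,x_2\}=\{v_b,v_c\}$: $c=e=v_a$.
\end{itemize}
In both subcases I run the primality trick. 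The non-neighbours of the common vertex $c$ inside $V(G)$ are few, and (\ref{eq:notin}) excludes $x_2,x_3$ for $d$ and $x_1,x_3$ for $f$. A short check then shows that $d$ and $f$ are non-neighbours of $x_1$ and $x_2$ respectively, either because they lie outside $V(G)$ or because they are the one remaining vertex, which is non-adjacent. This gives the contradiction.

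\emph{Main obstacle.} The remaining subcase is the triangle $\{v_b,v_c,v_d\}$ with one of $x_1,x_2$ equal to $v_d$, say $x_1=v_d$, $x_2=v_b$, $x_3=v_c$. Then $c=v_e$ and $e=v_a$ are different, and the trick does not apply directly. Here I would use the freedom in the Setup: the choice of which two of $x_1cd,\ x_2ef,\ x_3ab$ lie in $\ini_<(I)$ was a ``without loss of generality''. First I would try to show that $x_3ab\in\ini_<(I)$ as well, by reducing $S(B_{12},B_{13})=x_2ef-x_3ab$ modulo $A$. Since the only variable of $G$ dividing $x_2ef$ with a neighbour is tightly constrained, a single reduction step should force the other term to be a leading term. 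Once $x_3ab\in\ini_<(I)$, the vertices $x_2=v_b$ and $x_3=v_c$ both have their outside neighbour equal to $v_a$, so the pair $(x_2,x_3)$ takes the role of $(x_1,x_2)$ with a common vertex. The same primality argument, now applied to $S(B_{12},B_{13})=v_a(\cdots)$, finishes the proof.

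If this reduction turns out to be delicate, the fallback is to enumerate the possible reduced Gröbner basis elements attached to the edges $v_dv_e$ and $v_bv_a$. Every trailing monomial must be a non-edge, and there are only a handful of non-edges of $G_2$, so one can check directly that the $S$-pair cannot reduce to $0$.
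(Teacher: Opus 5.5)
Your treatment of the case $c=e$ is correct and is the paper's first step. The paper does it uniformly: $d,f\notin\{x_1,x_2,x_3,c\}$, so an edge $x_2f$ or $x_1d$ would create a vertex of degree $4$ in a graph of maximum degree $3$. Your neighbour checks are equivalent, and your location of the triangle inside $G_2$ matches the paper's observation (ii).

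\textbf{The gap.} It is in the subcase you flag yourself, $x_1=v_d$, $x_2=v_b$, $x_3=v_c$, $c=v_e$, $e=v_a$. One reduction of $S(B_{12},B_{13})=x_2ef-x_3ab$ does not force $x_3ab\in\ini_<(I)$.
\begin{itemize}
\item Here $f=v_e$ or $f\notin V(G)$. So if $x_2ef$ leads, the only reducer is $F=x_2e-rs\in A$, which leaves $frs-x_3ab$, and nothing is forced.
\item Suppose $F=x_2e-x_3s$ and $\{a,b\}=\{f,s\}$; this is not excluded by (\ref{eq:notin})--(\ref{eq:neq}). Then $x_2ef-x_3ab=f\,F$ reduces to $0$ while $x_3ab\notin\ini_<(I)$. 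This configuration is killed only by a \emph{different} $S$-pair, e.g.\ $S(F,B_{23})=ecd-x_3^2s$, neither of whose terms lies in $\ini_<(I)$.
\item If instead $f=c$ and $r=x_1$, then $frs\in\ini_<(I)$ by (\ref{eq:red2}), and the reduction simply continues.
\end{itemize}
So establishing $x_3ab\in\ini_<(I)$ already requires deriving contradictions by other means, and the relabelling detour buys nothing.

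Your fallback is not a finite check either. The variables $a,b,d,f,r,s$ can lie outside $V(G)$, so the trailing monomials are not confined to the few non-edges of $G_2$. You also do not identify an $S$-pair that actually fails to reduce.

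\textbf{The missing idea.} Use the element of $A$ attached to the edge $\{x_3,e\}=\{v_c,v_a\}$, namely $H=x_3e-\ell m$ with $\ell,m\notin\{x_3,e\}$ and $\ell m\notin\ini_<(I)$.
\begin{itemize}
\item Compute $S(B_{23},H)=x_2\ell m-ecd$. The term $ecd$ is not in $\ini_<(I)$, because $cd\notin\ini_<(I)$, $\{e,c\}\notin E(G)$, and $d\notin N_G(e)=\{x_2,x_3\}$.
\item Hence $x_2\ell m\in\ini_<(I)$. After renaming, $\ell\in N_G(x_2)\setminus\{x_3,e\}=\{x_1\}$.
\item Then $S(B_{13},H)=x_1^2m-e^2f$ is a nonzero element of $I$. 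But $x_1^2m\notin\ini_<(I)$, since $x_1m=\ell m\notin\ini_<(I)$, and $e^2f\notin\ini_<(I)$, since $ef\notin\ini_<(I)$. This is a contradiction.
\end{itemize}
This is how the paper disposes of the case $c\neq e$.
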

\begin{proof}
 If $|V(G)| = 6$, then the claim holds trivially. 
Assume that $|V(G)| \leq 5$ and that $G$ is isomorphic to $G_1$ or $G_2$. 
Then each vertex in $G$ has degree at most $3$. 
The following observations hold:

\begin{enumerate}
    \item[(i)] If $G \cong G_1$, then $c = e$ and $\{x_3,c\} \in E(G)$.
    \item[(ii)] If $G \cong G_2$, then either $c = e$, or else $c \neq e$ and exactly one of the vertices $c$ and $e$ is adjacent to $x_3$ in $G$. 
\end{enumerate}

First, suppose that $c = e$. 
Then  
\[
S(B_{23}, B_{13}) = x_2cf - x_1cd = c(x_2f - x_1d).
\]
By the primality of $I$ we obtain $x_2f - x_1d \in I$.  Due to Lemma~\ref{lem:sqfree} we have $x_2 \neq f$ and $x_1\neq d$. Then, together with (\ref{eq:neq}), (\ref{eq:notin}) and the assumption $c=e$, we see that $f,d\notin\{ x_1, x_2,x_3,c\}$. This shows that if $\ini_<(x_2f - x_1d)=x_2f$, then $\deg_G(x_2) >3$, and if $\ini_<(x_2f - x_1d)=x_1d$, then $\deg_G(x_1) >3$, a contradiction.  Therefore, our assumption $c = e$ is false, and we conclude that $c\neq e$.

Following (i) above, it follows that $G$ is not isomorphic to $G_1$. 
In particular, the vertices $\{x_1, x_2, x_3, c, e\}$ are distinct in $G$, and $G \cong G_2$. By (ii), either $\{x_3,c\}$ or $\{x_3,e\}$ is an edge in $G$. 
It suffices to consider the case $\{x_3,e\} \in E(G)$. 
Then $x_3e \in \ini_<(I)$, and there exist $\ell, m \notin \{x_3,e\}$ such that 
\[
H = x_3e - \ell m \in A,  \qquad \ell m \notin \ini_<(I).
\]
Consider the $S$-pair $S(B_{23},H) = x_2\ell m - ecd.$ Since $cd \notin \ini_<(I)$ by (\ref{eq:red1}), and $ec,ed \notin \ini_<(I)$ because $\{e,c\} \notin E(G)$ and $d \notin \{x_2,x_3\}$ (see (\ref{eq:notin})), the initial term of this $S$-pair is $x_2\ell m$. 
As $\ell m \notin \ini_<(I)$, we may assume $x_2\ell \in \ini_<(I)$. 
Thus $\ell = x_1$, since $\ell \notin \{x_3,e\}$ but $\ell \in N_G(x_2) = \{x_1, x_3, e\}$. Now $H = x_3e - x_1m$, and consider the $S$-pair $S(B_{13}, H) = x_1^2m - e^2f.$ Note that $ef,\, e^2,\, x_1^2 \notin \ini_<(I)$, and $\ell m = x_1m \notin \ini_<(I)$. 
Hence, the initial term of this $S$-pair does not belong to $\ini_<(I)$, a contradiction. 
Therefore, $G$ is not isomorphic to $G_2$.
This completes the proof.
\, \, \, \, \, \, \, \, \, 
\end{proof}

\begin{Lemma}\label{lemma:G_3}
With the notation introduced in Setup~\ref{setup}, the graph $G$ is not isomorphic to the graph $G_3$ displayed in Figure~\ref{badgraphs}.
\end{Lemma}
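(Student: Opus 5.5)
The plan is to pin down the positions of $x_1,x_2,x_3,c,e$ inside $G_3$ using the adjacency data from Setup~\ref{setup}. Then I will derive a contradiction from the reduction to $0$ of one $S$-pair, arguing much as in the proof of Lemma~\ref{lemma:G1G2}.

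\emph{Locating the vertices.} Suppose $G\cong G_3$. Then $|V(G)|=5$ and $|E(G)|=6$. The graph $G_3$ is a $4$-cycle $u_1u_2u_3u_4$ together with a vertex $w$ adjacent to $u_1$ and $u_2$. So it has exactly one triangle, $\{u_1,u_2,w\}$, with $\deg u_1=\deg u_2=3$ and all other degrees equal to $2$. Hence $\{x_1,x_2,x_3\}=\{u_1,u_2,w\}$. By (\ref{eq:red2}) and (\ref{eq:notin}), $c$ is a neighbour of $x_1$ outside the triangle, and $e$ is a neighbour of $x_2$ outside the triangle. Therefore $x_1,x_2$ are the two degree-$3$ vertices and $x_3=w$. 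Up to relabelling we get $c=u_4$ and $e=u_3$. Thus $c\neq e$, $\{c,e\}\in E(G)$, $\{x_3,c\},\{x_3,e\}\notin E(G)$, and
\[
N_G(x_1)=\{x_2,x_3,c\},\quad N_G(x_2)=\{x_1,x_3,e\},\quad N_G(x_3)=\{x_1,x_2\},\quad N_G(c)=\{x_1,e\},\quad N_G(e)=\{x_2,c\}.
\]

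\emph{Restricting $d$ and $f$.} Next I use (\ref{eq:notin}), (\ref{eq:red1}) and (\ref{eq:neq}). Since $cd\notin\ini_<(I)$, we have $d\notin N_G(c)\cup\{c\}=\{x_1,e,c\}$, and $d\notin\{x_2,x_3\}$. Similarly $ef\notin\ini_<(I)$ gives $f\notin\{x_2,c,e,x_1,x_3\}$. So $d$ and $f$ are not vertices of $G$: they never divide any monomial of $\ini_<(I)$, and every product of $d$ or $f$ with another variable lies outside $\ini_<(I)$. In the same way $a,b\notin\{x_1,x_2\}$ and $\{a,b\}\neq\{c,e\}$.

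\emph{Deriving the contradiction.} I then use the Gr\"obner basis elements $D=x_1c-pq$ and $E=x_2e-rs$ in $A$, with $pq,rs\notin\ini_<(I)$, and reduce $S(B_{23},B_{13})=x_2ef-x_1cd$ modulo $A$. Since $d$ and $f$ are not vertices, the only reductions available are $x_2e\to rs$ and $x_1c\to pq$, which leaves the binomial $rsf-pqd$. I will show that neither monomial is in $\ini_<(I)$. For this I pin down $p,q,r,s$ from the $S$-pairs $S(B_{12},D)=x_2pq-cab$ and $S(B_{12},E)=x_1rs-eab$, as in the $G_2$ case. Because $N_G(x_2)=\{x_1,x_3,e\}$, a leading term $x_2pq$ in $\ini_<(I)$ forces $p\in\{x_1,x_3,e\}$, which should leave only a few choices. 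The remainder $rsf-pqd$ must then vanish, so $rsf=pqd$. Since $f,d$ are not vertices while $p,q,r,s$ are tightly constrained, this should force $d=f$ and $rs=pq$. Then $D-E=x_1c-x_2e\in I$, which is impossible because $x_1c,x_2e$ are distinct leading terms in the reduced Gr\"obner basis. Failing this, I will fall back on primality: cancel a common variable, as with $c(x_2f-x_1d)$ in Lemma~\ref{lemma:G1G2}, and contradict the degree bounds in $G$.

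The main obstacle is this last step: the case analysis on $p,q,r,s$ (and on $a,b$), showing that each case either yields a leading term outside $\ini_<(I)$ or forces a relation contradicting minimality of $A$ or primality of $I$. I will first try to control $p,q$ by degree-counting in $G$, in the same way as for $\ell,m$ in the $G_2$ argument.
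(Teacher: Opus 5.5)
Your identification of the vertices of $G_3$ is correct. So is the observation that $d,f\notin V(G)$; the paper's proof uses exactly this fact for $f$.

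\textbf{The gap is in the last step.} Reducing $S(B_{23},B_{13})=x_2ef-x_1cd$ by $E=x_2e-rs$ and $D=x_1c-pq$ gives $rsf-pqd\in I$. You need no analysis of $p,q,r,s$ to see that neither term lies in $\ini_<(I)$: this follows at once from $rs,pq\notin\ini_<(I)$ and $d,f\notin V(G)$. The outcome, however, is only the monomial identity $rsf=pqd$, which is not a contradiction. It has two kinds of solutions:
\begin{enumerate}
\item[(a)] $d=f$ with $rs=pq$;
\item[(b)] $d\neq f$ with, say, $p=f$, $r=d$, $q=s$, i.e.\ $D=x_1c-fq$ and $E=x_2e-dq$.
\end{enumerate}
Your proposal never treats (b); the assertion that the constraints ``should force $d=f$'' is unjustified. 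In (a) your contradiction is invalid. Nothing prevents two elements of a reduced Gr\"obner basis from sharing a trailing monomial, and $x_1c-x_2e$ reduces modulo $A$ to $pq-rs=0$. In fact, when $d=f$ one has $S(B_{23},B_{13})=f(x_2e-x_1c)$, so primality forces $x_2e-x_1c\in I$ rather than ruling it out. The proposal therefore stops exactly where the real work begins. The fallback (``cancel a common variable \dots'') is not yet an argument.

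\textbf{The missing idea} is to choose an $S$-pair one of whose terms is already outside $\ini_<(I)$. Since $N_G(x_3)=\{x_1,x_2\}$, $a,b\notin\{x_1,x_2\}$ and $ab\notin\ini_<(I)$, the monomial $x_3ab$ is not in $\ini_<(I)$. The paper uses $S(B_{12},B_{13})=x_2ef-x_3ab$. One reduction by $F=x_2e-rs$ gives $frs-x_3ab\in I$. Then $frs\in\ini_<(I)$ would force $fr$ or $fs\in\ini_<(I)$, hence $f\in V(G)$, contradicting what you have already shown.

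One must also exclude the degenerate possibility $frs=x_3ab$, in which this binomial vanishes; the paper does not address it. Here $f\in\{a,b\}$ and $F=x_2e-x_3b'$ with $\{a,b\}=\{f,b'\}$. Then $S(B_{23},F)=x_3^2b'-ecd$ reduces, via the basis element $ce-gh$, to $x_3^2b'-ghd$. Neither term of this lies in $\ini_<(I)$, which forces $gh=x_3^2$, contrary to Lemma~\ref{lem:sqfree}.

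Your own cases (a) and (b) can also be refuted by further $S$-pairs of this type, first $S(B_{23},E)$ and then $S(B_{12},D)$. That case analysis, however, is precisely what the proposal leaves undone.
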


\begin{proof}
Assume that $G \cong G_3$. 
Then $V(G) = \{x_1, x_2, x_3, c, e\}$, and together with (\ref{eq:red2}) we obtain 
\[
E(G) = \big{\{} \{x_1, x_2\}, \{x_2, x_3\}, \{x_1, x_3\}, \{x_1, c\}, \{c, e\}, \{x_2, e\} \big{\}}.
\]
Moreover, $x_3ab \notin \ini_<(I)$, by (\ref{eq:notin}) and (\ref{eq:red1}). Since $x_2e \in \ini_<(I)$, there exist $r,s \notin \{x_2, e\}$ such that
\[
F = x_2e - rs \in A,  \qquad  rs \notin \ini_<(I).
\] 
Also, since $x_1c \in \ini_<(I)$, there exist $t,u \notin \{x_1, c\}$ such that 
\[
G = x_1c - tu \in A, \qquad tu \notin \ini_<(I).
\]

Now consider 
\[
S(B_{12}, B_{13}) = x_2ef - x_3ab 
     = f(x_2e - rs) + frs - x_3ab \in I,
\]
which implies $frs - x_3ab \in I$ with $\ini_<(frs - x_3ab) = frs \in \ini_<(I)$. 
Since $rs \notin \ini_<(I)$, 
either $fr$ or $fs$ belongs to $\ini_<(I)$. In particular, $f\in V(G)$. On the other hand, we have $f \notin \{x_1, x_3, e\}$ by (\ref{eq:notin}) and (\ref{eq:neq}); $f \neq x_2$ because $fe \notin \ini_<(I)$ but $x_2e \in \ini_<(I)$ (see (\ref{eq:red1}) and (\ref{eq:red2})); $f \neq c$ 
because $fe \notin \ini_<(I)$ while $ec \in \ini_<(I)$.  This contradicts $f\in V(G)$, and the proof is complete. 
\, \, \, \, \, \, \, \, \, \, \, \, \, 
\end{proof}

\begin{Lemma}\label{lemma:G_4}
With the notation introduced in Setup~\ref{setup}, the graph $G$ is not isomorphic to the graph $G_4$ displayed in Figure~\ref{badgraphs}.
\end{Lemma}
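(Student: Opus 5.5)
We argue by contradiction, in the same spirit as the proofs of Lemma~\ref{lemma:G1G2} and Lemma~\ref{lemma:G_3}. Assume $G\cong G_4$. Then $G$ has exactly one triangle, and it must be $\{x_1,x_2,x_3\}$. By (\ref{eq:red2}) we have $x_1c,x_2e\in\ini_<(I)$, and by (\ref{eq:notin}) we have $c\notin\{x_2,x_3\}$ and $e\notin\{x_1,x_3\}$. Hence $c$ and $e$ are the pendant vertices attached to $x_1$ and $x_2$, respectively. Let $g$ denote the pendant vertex attached to $x_3$. Thus
\[
V(G)=\{x_1,x_2,x_3,c,e,g\},\qquad N_G(c)=\{x_1\},\quad N_G(e)=\{x_2\},\quad N_G(g)=\{x_3\}.
\]
As in the proof of Lemma~\ref{lemma:G_3}, the reduced Gr\"obner basis $A$ contains $F=x_2e-rs$ with $r,s\notin\{x_2,e\}$ and $rs\notin\ini_<(I)$. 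It also contains $H=x_1c-tu$ with $t,u\notin\{x_1,c\}$ and $tu\notin\ini_<(I)$. Finally, since $x_3g\in\ini_<(I)$, it contains $P=x_3g-vw$ with $v,w\notin\{x_3,g\}$ and $vw\notin\ini_<(I)$.

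First I would decide whether $x_3ab\in\ini_<(I)$. We know $ab\notin\ini_<(I)$ and $a,b\notin\{x_1,x_2\}$. So $x_3ab\in\ini_<(I)$ exactly when $g\in\{a,b\}$. The key step then copies the proof of Lemma~\ref{lemma:G_3}. Write $S(B_{12},B_{13})=f(x_2e-rs)+frs-x_3ab$, so that $frs-x_3ab\in I$.

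If $g\notin\{a,b\}$, then $x_3ab\notin\ini_<(I)$. Hence $frs\in\ini_<(I)$, and therefore $fr$ or $fs$ lies in $\ini_<(I)$, so $f\in V(G)$. The argument of Lemma~\ref{lemma:G_3} excludes $f\in\{x_1,x_2,x_3,e\}$, so $f\in\{c,g\}$. If $f=c$, the pendant structure forces $r=x_1$ (say). If $f=g$, it forces $r=x_3$. In each case I would compute the $S$-pair of $F$ with $B_{12}$ or $B_{13}$, and of $F$ with $H$ or $P$. The aim is to produce either a binomial whose initial monomial is divisible by a non-edge, or a quadratic binomial $x_iy-x_jz\in I$ obtained after cancelling a common variable via primality, exactly as $c(x_2f-x_1d)$ was used in Lemma~\ref{lemma:G1G2}. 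Its leading term would force a new neighbour of some vertex, contradicting $\deg_G\le 3$ on the triangle and $\deg_G=1$ on the pendants.

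If instead $g\in\{a,b\}$, say $a=g$, then $B_{12}=x_1x_2-gb$. I would reduce $x_3gb$ by $P$ and run the symmetric argument on $S(B_{12},P)=x_1x_2w'\!-\dots$, or more precisely on the $S$-pair $x_3\cdot B_{12}-b\cdot P$. The goal is the same pendant/degree contradiction, now using $x_1c,x_2e\in\ini_<(I)$ and $ce,cg,eg\notin\ini_<(I)$.

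I expect the main obstacle to be the bookkeeping of subcases: the identities of $r,s,t,u,v,w$ and of $d,f$ within $V(G)$, and which term of each $S$-pair is initial. I would organise the cases by the pendant structure: every quadratic monomial of $\ini_<(I)$ containing a pendant $p$ is $p$ times its unique neighbour. Consequently, in each $S$-polynomial the reductions are forced, and the only escape is a variable outside $V(G)$. Such a variable yields a squarefree quadratic binomial in $I$ (by primality) whose initial term adds an edge to $G$, contradicting $G\cong G_4$.
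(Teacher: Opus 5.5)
Your setup is right ($c$ and $e$ are the pendants at $x_1$ and $x_2$, and your $g$ is the third pendant). However, the proposal leaves out exactly the steps that make up the proof, and in one of your two cases the suggested route cannot work.

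\emph{Case $g\in\{a,b\}$, say $a=g$.} You propose to use the $S$-pair of $B_{12}$ and $P$. Its leading monomials $x_1x_2$ and $x_3g$ are coprime, so it reduces to $0$ by Buchberger's criterion and imposes no constraint. Reducing $x_3gb$ in $S(B_{12},B_{13})$ by $P$ only gives $frs-bvw\in I$, and you have no control over which of its terms is standard. So this case is simply open.

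\emph{Case $g\notin\{a,b\}$.} The step ``hence $frs\in\ini_<(I)$'' tacitly assumes $frs\neq x_3ab$, which fails e.g.\ for $f=a$, $rs=x_3b$. The subcases $f=c$ and $f=g$ are only announced, although they can be closed:
\begin{enumerate}
\item for $f=c$, $r=x_1$, one gets $S(B_{12},F)=x_1^2s-eab$, with both terms outside $\ini_<(I)$;
\item for $f=g$, $r=x_3$, subtracting $sB_{13}$ from $S(B_{12},F)$ gives $e(gs-ab)$, so $gs-ab\in I$ is nonzero with both terms standard.
\end{enumerate}
The heuristic in your last paragraph does not replace these computations.

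The missing idea is to let the pendant $e$ sit on the \emph{other} side of the $S$-pairs of $F$. This removes every case distinction (on $g\in\{a,b\}$, on $f$, and on degenerate coincidences) and makes $H$ and $P$ unnecessary. Since $N_G(e)=\{x_2\}$ and $a,b,c,d\neq x_2$, the monomials $eab$ and $ecd$ are standard. Hence $S(B_{12},F)=x_1rs-eab$ and $S(B_{23},F)=x_3rs-ecd$ force $x_1rs,\,x_3rs\in\ini_<(I)$.

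Combined with $rs\notin\ini_<(I)$, the first gives (after possibly swapping $r,s$) $r\in\{x_3,c\}$. The second then gives $r=c$ and $x_3s\in\ini_<(I)$. Hence $s=g$, because $s\neq x_2$, and $s\neq x_1$ since $cs\notin\ini_<(I)$ while $x_1c\in\ini_<(I)$. So $F=x_2e-cg$.

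Now $S(B_{23},F)=c(x_3g-ed)$, so $x_3g-ed\in I$ by primality. Then $gB_{13}-x_1(x_3g-ed)=e(x_1d-gf)$, so $x_1d-gf\in I$. This is a nonzero binomial neither of whose terms lies in $I(G)$, since $d\notin\{x_2,x_3,c\}$ and $f\neq x_3$. That is exactly the paper's argument.
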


\begin{proof}
Assume that $G \cong G_4$. 
Then $V(G) = \{x_1, x_2, x_3, c, e, \ell\}$, and together with (\ref{eq:red2}) we obtain 
\[
E(G) = \big{\{} \{x_1, x_2\}, \{x_2, x_3\}, \{x_1, x_3\}, \{x_1, c\}, \{x_2, e\}, \{x_3, \ell\} \big{\}}.
\]
Since $x_2e \in \ini_<(I)$, there exist $r,s \notin \{x_2, e\}$ such that
\[
F = x_2e - rs \in A,  \qquad  rs \notin \ini_<(I).
\] 

Now consider
\[
S(B_{12}, F) = x_1rs - eab.
\]
From (\ref{eq:red1}), $ab \notin \ini_<(I)$, and from (\ref{eq:notin}) together with $N_G(e) = \{x_2\}$ we see $ea, eb \notin \ini_<(I)$. 
Hence $\ini_<(x_1rs - eab) = x_1rs \in \ini_<(I)$. 
Since $rs \notin \ini_<(I)$, we may assume $x_1r \in \ini_<(I)$. Thus $r \in N_G(x_1) = \{x_2, x_3, c\}$. 
Because $r \neq x_2$, we must have $r \in \{x_3, c\}$. 

Next consider $S(B_{23}, F) = x_3rs - ecd.$ From (\ref{eq:red1}), $cd \notin \ini_<(I)$, and from (\ref{eq:notin}) with $N_G(e) = \{x_2\}$ we have $ec, ed \notin \ini_<(I)$. Hence $\ini_<(x_3rs - ecd) = x_3rs \in \ini_<(I)$. We have $r \in \{x_3, c\}$ but $\ini_<(I)$ is squarefree and $\{x_3, c\} \notin E(G)$, so we conclude that $x_3r \notin \ini_<(I)$. Also, we have $rs\notin \ini_<(I)$, therefore, it follows that $x_3s\in \ini_<(I)$. Again, due to $rs \notin \ini_<(I)$ and $x_3s\in \ini_<(I)$, we conclude that $r\neq x_3$, and hence $r=c$. To complete the proof, it is enough to show that the assumption $r=c$ leads to a contradiction.

Due to $x_3s\in \ini_<(I)$, we observe that $s \in N_G(x_3) = \{x_1, x_2, \ell\}$. 
Since $s \neq x_2$, and $s \neq x_1$ (because $x_1c \in \ini_<(I)$ but $cs \notin \ini_<(I)$), we obtain $s = \ell$. 
Therefore, $F = x_2e - c\ell$. This gives, $S(B_{23}, F) = c(x_3\ell - ed)$, and $x_3\ell - ed\in I$ because $I$ is prime. Note that $x_3\ell - ed\neq 0$ because $x_3\neq e$ and $x_3\neq d$ due to (\ref{eq:notin}). Also $ed\notin \ini_<(I)$ because $N_G(e)=\{x_3\}$ and $d\neq x_3$. Thus, $\ini_<(x_3\ell-ed)=x_3\ell$.

Finally, consider $S(B_{13},x_3\ell - ed)=e(x_1d-\ell f)$. This gives $x_1d-\ell f\in I$ because $I$ is prime. Moreover, $x_1d-\ell f\neq 0$ because $x_1\neq \ell$ (they are distinct vertices of $G$) and $x_1\neq f$ because of (\ref{eq:notin}). Also, $d\notin N_G(x_1)=\{x_2,x_3,c\}$ and $f\notin N_G(\ell)=\{x_3\}$ due to (\ref{eq:notin}) and (\ref{eq:neq}). This shows that $\ini_<(x_1d-\ell f)\notin \ini_<(I)$, a contradiction. This completes the proof. 
\, \, \, \, \, \, \, \, \, \, \, \, \, \, \, \, \, \,
\end{proof}

Next, we recall two classifications given in \cite{HR} for the bi--Cohen-Macaulay bipartite graphs and chordal graphs which will be useful to prove the main theorem of this section.

\begin{Theorem}\label{biCM bipartite}
{\rm (\cite[Theorem~3]{HR})}
Let $G$ be a bipartite graph with the bipartition $V(G)=V_1\cup V_2$ where $V_1=\{u_1,\ldots,u_s\}$ and $V_2=\{w_1,\ldots,w_t\}$. Then the following conditions are equivalent:
\begin{enumerate}
    \item $G$ is bi--Cohen--Macaulay;
    \item $s=t$ and $E(G)=\big{\{} \{u_i, w_j\} : 1\leq i\leq j\leq s \big{\}}$.
\end{enumerate}
\end{Theorem}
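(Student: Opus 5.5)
The plan is to prove both implications through two standard characterizations: Fröberg's theorem for linear resolution, which the paper already uses in Lemma~\ref{initial ideal as edge ideal}, and the Herzog--Hibi classification of Cohen--Macaulay bipartite graphs. That classification says a bipartite graph $G$ with parts $V_1,V_2$ is Cohen--Macaulay if and only if $|V_1|=|V_2|=s$ and the vertices can be labelled so that three conditions hold: (a) $\{u_i,w_i\}\in E(G)$ for all $i$; (b) $\{u_i,w_j\}\in E(G)$ implies $i\le j$; (c) if $\{u_i,w_j\},\{u_j,w_k\}\in E(G)$, then $\{u_i,w_k\}\in E(G)$.

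The first step is to translate "linear resolution" into a combinatorial condition on a bipartite $G$. By Fröberg, $I(G)$ has linear resolution if and only if $\overline{G}$ is chordal. In $\overline{G}$ both $V_1$ and $V_2$ are cliques. Any induced cycle of length $\ge 5$ in $\overline{G}$ would have at least three vertices in one of these cliques, and hence a chord. So the only possible obstruction is an induced $4$-cycle in $\overline{G}$. Such a $4$-cycle must have two vertices in each part, since three in one part again give a chord. Its two non-edges are then exactly an induced $2K_2$ in $G$, that is, two disjoint edges $\{u,w\},\{u',w'\}$ with $\{u,w'\},\{u',w\}\notin E(G)$. Hence, for bipartite $G$, $I(G)$ has linear resolution if and only if $G$ contains no induced $2K_2$.

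For (1)$\Rightarrow$(2), Cohen--Macaulayness and the Herzog--Hibi classification give $s=t$ and a labelling satisfying (a)--(c). Take $i<j$. By (a), $\{u_i,w_i\}$ and $\{u_j,w_j\}$ are edges. Since $G$ has no induced $2K_2$, either $\{u_i,w_j\}$ or $\{u_j,w_i\}$ is an edge. The latter is excluded by (b) because $j>i$, so $\{u_i,w_j\}\in E(G)$. Together with (b), this shows $E(G)=\{\{u_i,w_j\}: i\le j\}$.

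For (2)$\Rightarrow$(1), I will verify both properties for this graph. Conditions (a) and (b) are immediate, and (c) holds because $i\le j\le k$ gives $i\le k$; so Herzog--Hibi gives Cohen--Macaulayness. For the absence of an induced $2K_2$, take two disjoint edges $\{u_i,w_j\}$ and $\{u_k,w_l\}$ with $i\le j$, $k\le l$, and assume $i\le k$ by symmetry. Then $i\le k\le l$, so $\{u_i,w_l\}$ is an edge and the pair is not induced. By the first step, $I(G)$ has linear resolution.

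The main obstacle is the reduction of chordality of $\overline{G}$ to $2K_2$-freeness, and specifically the claim that induced $4$-cycles in $\overline{G}$ are the only obstruction. This must be checked carefully, including the parity and counting argument for how cycle vertices split between the two cliques. Everything else rests on the cited Herzog--Hibi theorem.
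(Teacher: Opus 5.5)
Your proof is correct. The paper does not prove this statement; it quotes it from \cite[Theorem~3]{HR}. Your argument is therefore a self-contained derivation from two external inputs: the Herzog--Hibi classification of Cohen--Macaulay bipartite graphs, and Fr\"oberg's theorem, which the paper already uses in Lemma~\ref{initial ideal as edge ideal}(4).

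The step you flag as the main obstacle is already complete as written. An induced cycle of length at least $4$ in $\overline{G}$ contains no triangle, so it meets each of the cliques $V_1,V_2$ in at most two vertices. This forces length exactly $4$ with two vertices on each side, and no parity consideration is needed.

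For (1)$\Rightarrow$(2) there is also a counting shortcut using a fact the paper itself invokes. By (a), $V_1$ is a minimum vertex cover, so $\height I(G)=s=|V(G)|-\alpha(G)$. Then \cite[Corollary~2]{HR}, the input to Lemma~\ref{initial ideal as edge ideal}(5), gives $|E(G)|=\binom{s+1}{2}$. On the other hand, (a) and (b) allow at most $s+\binom{s}{2}=\binom{s+1}{2}$ edges, so every $\{u_i,w_j\}$ with $i\le j$ must be present. This avoids the $2K_2$ analysis in that direction. It does nothing for (2)$\Rightarrow$(1), where linear resolution still has to be checked, and your $2K_2$-freeness argument is the natural way to do that.

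As with the statement itself, your proof tacitly assumes $G$ has no isolated vertices; otherwise (1) can hold with $s\neq t$. It also reads (2) up to relabelling, exactly as in the Herzog--Hibi theorem you invoke.
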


Recall that the \emph{clique complex} of a finite simple graph $G$, denoted by $\Delta(G)$, is the simplicial complex whose faces correspond to the cliques, i.e., complete subgraphs, of $G$. A \emph{free vertex} of a simplicial complex $\Delta$ is a vertex that is contained in exactly one facet of $\Delta$. 

\begin{Theorem}\label{biCM chordal}
{\rm (\cite[Theorem~4]{HR})} 
Let $G$ be a chordal graph. Then the following conditions are equivalent: 
\begin{enumerate}
    \item $G$ is bi--Cohen--Macaulay;
    \item Let $F_1,\ldots,F_m$ be the facets of $\Delta(G)$ which admit a free vertex. Then $m=1$ or $m>1$ and 
    \begin{enumerate}
        \item $V(G)$ is the disjoint union of $F_1,\ldots,F_m$; 
        \item each $F_i$ has exactly one free vertex $v_i$ for $i=1,\ldots,m$;
        \item the induced subgraph of $G$ on the vertex set $V(G)-\{v_1,\ldots,v_m\}$ is a clique. 
    \end{enumerate}
\end{enumerate}
\end{Theorem}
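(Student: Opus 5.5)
We derive the statement from two standard characterizations of chordal graphs: the Cohen--Macaulay criterion of Herzog--Hibi--Zheng and Fr\"oberg's theorem, which is already used in the proof of Lemma~\ref{initial ideal as edge ideal}. Throughout we may assume $G$ has no isolated vertices, since isolated vertices do not affect $I(G)$.

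\textbf{The two ingredients.} First, by the Herzog--Hibi--Zheng theorem, a chordal graph $G$ has $S/I(G)$ Cohen--Macaulay if and only if $V(G)$ is the disjoint union of the facets $F_1,\ldots,F_m$ of $\Delta(G)$ that admit a free vertex. Hence Cohen--Macaulayness of $G$ is equivalent to condition (a), with the understanding that when $m=1$ the single facet $F_1=V(G)$ means $G$ is complete. Second, by Fr\"oberg's theorem, $I(G)$ has linear resolution if and only if $\overline{G}$ is chordal. Whenever this holds, $G$ contains no induced $2K_2$, that is, no two edges $\{p,q\},\{r,s\}$ on four distinct vertices with no further edges among $p,q,r,s$; indeed such a $2K_2$ would give an induced $4$-cycle in $\overline{G}$. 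So the task reduces to showing that, for chordal $G$ satisfying (a), co-chordality is equivalent to (b) and (c).

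\textbf{(1)$\Rightarrow$(2).} Assume $G$ is bi--Cohen--Macaulay and $m>1$. Condition (a) holds by the Cohen--Macaulay criterion. For $i\neq j$, let $v$ be a free vertex of $F_i$ and $w$ a free vertex of $F_j$. Because $v$ is free, $N_G(v)=F_i\setminus\{v\}$, and likewise $N_G(w)=F_j\setminus\{w\}$. Both sets are nonempty because $G$ has no isolated vertices. Since the $F_k$ are disjoint, $v$ is adjacent to no vertex of $F_j$ and $w$ to no vertex of $F_i$. Now take any $x\in F_i\setminus\{v\}$ and $y\in F_j\setminus\{w\}$. The edges $\{v,x\},\{w,y\}$ would form an induced $2K_2$ unless $\{x,y\}\in E(G)$, so $x\sim y$.
\begin{enumerate}
\item[(b)] If $F_i$ had a second free vertex $v'$, choose $x=v'$. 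Then $v'$ must be adjacent to $y\in F_j$, which contradicts $N_G(v')=F_i\setminus\{v'\}$. Hence each $F_i$ has exactly one free vertex $v_i$.
\item[(c)] Every non-free vertex of $F_i$ is adjacent to every non-free vertex of $F_j$. Vertices inside a single $F_i$ are adjacent because $F_i$ is a clique. Together with (a), this shows $V(G)\setminus\{v_1,\ldots,v_m\}$ induces a clique.
\end{enumerate}

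\textbf{(2)$\Rightarrow$(1).} Cohen--Macaulayness again follows from the Herzog--Hibi--Zheng criterion via (a); if $m=1$, $G$ is complete. It remains to show $\overline{G}$ is chordal. If $m=1$, $\overline{G}$ is edgeless. If $m>1$, set $C=V(G)\setminus\{v_1,\ldots,v_m\}$.
\begin{itemize}
\item By (c), $C$ is independent in $\overline{G}$.
\item The $v_i$ are pairwise nonadjacent in $G$, since each neighborhood $N_G(v_i)$ lies inside $F_i$. So $\{v_1,\ldots,v_m\}$ is a clique in $\overline{G}$.
\end{itemize}
Thus $\overline{G}$ is a split graph. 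Any induced cycle of length $\geq 4$ in a split graph would contain two nonadjacent consecutive-but-one vertices from the clique part or two adjacent vertices from the independent part, which is impossible; so split graphs are chordal. Fr\"oberg's theorem then gives linear resolution.

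The main obstacle is step (b) of the forward direction. The point is to produce the induced $2K_2$ from two free vertices of one facet together with a free vertex of another facet, and the step uses the absence of isolated vertices in an essential way.
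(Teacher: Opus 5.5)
The paper gives no proof of this statement; it is quoted from \cite[Theorem~4]{HR}, so there is no internal argument to compare with. Your argument is a correct, self-contained derivation. The Herzog--Hibi--Zheng criterion reduces Cohen--Macaulayness to condition (a), and Fr\"oberg's theorem handles linear resolution in both directions:
\begin{enumerate}
\item[(i)] Forward: a free vertex $v$ of $F_i$, a free vertex $w$ of $F_j$, and non-free $x\in F_i$, $y\in F_j$ would give an induced $2K_2$ unless $x\sim y$. This yields (b) and (c).
\item[(ii)] Backward: $\overline{G}$ is a split graph with clique $\{v_1,\ldots,v_m\}$ and independent set $V(G)\setminus\{v_1,\ldots,v_m\}$, hence chordal.
\end{enumerate}
Both steps check out.

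Two points need tightening.

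\textbf{Isolated vertices.} Your reduction to graphs without isolated vertices is not a harmless normalization. Isolated vertices leave $I(G)$ unchanged, but they change $\Delta(G)$ and therefore condition (2). In fact the statement as quoted is false for $G=K_2\sqcup K_1$. This graph is bi--Cohen--Macaulay, since $(x_1x_2)$ is Cohen--Macaulay with linear resolution. Yet $\Delta(G)$ has $m=2$ facets with free vertices, and the edge facet has two free vertices, so (b) fails. So ``no isolated vertices'' (the convention the paper uses whenever it applies this result) must be an explicit hypothesis, not a reduction. Your step (b) is exactly where the hypothesis is used: it needs $F_j\setminus\{w\}\neq\emptyset$.

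\textbf{The case $m=1$ in (2)$\Rightarrow$(1).} You assert that $m=1$ forces $G$ to be complete. Condition (a) is not part of (2) when $m=1$, so this needs an argument. Use Dirac's lemma: a non-complete chordal graph has two nonadjacent simplicial vertices $u,w$. Simplicial vertices are free vertices of the facets $N_G[u]$ and $N_G[w]$ of $\Delta(G)$, and these facets are distinct because $w\notin N_G[u]$. Hence $m\geq 2$ whenever $G$ is not complete.
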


For a better intuition of bi--Cohen--Macaulay chordal graphs with a given number of edges, we provide the following construction.

\begin{Construction}\label{construction}
 Let $t\geq 2$ and $K_{t-1}$ be the complete graph on the vertex set $W=\{w_1,\ldots,w_{t-1}\}$. Let $W_1,\ldots,W_m$ be a partition for $W$ where $m\geq 1$. Now, let $G$ be the graph on $t-1+m$ vertices with the vertex set $W\cup \{v_1,\ldots,v_m\}$ obtained from $K_{t-1}$ by attaching the vertex $v_i$ to the vertices in $W_i$ for $i=1,\ldots, m$. More precisely, the edge set of $G$ is 
 \[
E(G)=E(K_{t-1})\cup (\bigcup_{i=1}^m \big{\{} \{v_i,w_j\} : w_j\in W_i \big{\}}). 
 \]
 Then, we have:
 \begin{enumerate}
     \item $|E(G)|={t-1\choose 2}+t-1={t\choose 2}$. 
     \item By setting $F_i=W_i\cup \{v_i\}$ for each $i=1,\ldots, m$, we see from Theorem~\ref{biCM chordal} that $G$ is a bi--Cohen--Macaulay chordal graph. 
     \item Let $H$ be a bi--Cohen--Macaulay chordal graph. Using the notation of Theorem~\ref{biCM chordal}, we set $W_i=F_i-\{v_i\}$ for each $i=1,\ldots, m$. Then, $H$ can be constructed as above.
 \end{enumerate}
\end{Construction}

Recall that $p(n)$ denotes the number of partitions of a positive integer $n$, that is, the number of ways to write $n$ as a sum of positive integers, where the order of the summands is irrelevant.

\begin{Corollary}\label{biCM chordal-2}
 A graph is a bi--Cohen--Macaulay chordal graph if and only if it is obtained as in Construction~\ref{construction}. Moreover, given $t\geq 2$, there exist exactly $p(t-1)$ non--isomorphic bi--Cohen--Macaulay chordal graphs with ${t \choose 2}$ edges.  
\end{Corollary}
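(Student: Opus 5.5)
The first assertion is essentially the combination of Theorem~\ref{biCM chordal} with Construction~\ref{construction}(2) and (3), so I will only need to check that the two directions of the construction really match the hypotheses of Theorem~\ref{biCM chordal}.

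For the direction from bi--Cohen--Macaulay chordal to constructed, let $H$ be such a graph, assumed connected with no isolated vertices as in Lemma~\ref{initial ideal as edge ideal}. Take the facets $F_1,\ldots,F_m$ of $\Delta(H)$ that have a free vertex. If $m>1$, conditions (a)--(c) say that $W:=V(H)\setminus\{v_1,\ldots,v_m\}$ is a clique $K_{t-1}$. The sets $W_i=F_i\setminus\{v_i\}$ partition $W$, and each $v_i$ is adjacent exactly to $W_i$, since $v_i$ is free and so lies only in $F_i$. Hence every edge of $H$ either lies in $W$ or joins some $v_i$ to $W_i$, which is precisely the edge set in Construction~\ref{construction}. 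The case $m=1$ needs separate care. A chordal graph always has a simplicial vertex, so $m\ge1$, and if $m=1$ then $\Delta(H)$ should have a single facet. The justification is that a chordal graph with at least two maximal cliques has at least two simplicial vertices lying in distinct maximal cliques, by Dirac's theorem. So $H$ is complete, and it arises from the construction with $m=1$, $W_1=W$ of size $t-1$, and $v_1$ any vertex. Conversely, Construction~\ref{construction}(2) already gives that every constructed graph is bi--Cohen--Macaulay and chordal. Chordality is immediate because each $v_i$ is simplicial and $W$ is a clique.

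For the counting statement, I will use Construction~\ref{construction}(1): every constructed graph with $t-1$ clique vertices has $\binom{t}{2}$ edges. Conversely, the edge count $\binom{t'}{2}$ determines $t'$, so a bi--CM chordal graph with $\binom t2$ edges must come from $|W|=t-1$. The construction assigns to each partition of the set $W$ a graph. Up to isomorphism, only the multiset of block sizes $|W_1|,\ldots,|W_m|$ matters, since any permutation of $W$ carrying one partition to another of the same type extends to an isomorphism. This gives a surjection from integer partitions of $t-1$ onto isomorphism classes.

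The main obstacle is injectivity: the partition type must be recoverable from the graph alone, independently of the presentation. My plan is to recover it from the clique complex.
\begin{itemize}
\item For $m\ge2$, the maximal cliques of the constructed graph are $W$ and the sets $W_i\cup\{v_i\}$; the set $W$ is not contained in any $W_i\cup\{v_i\}$ because $m\ge2$.
\item The facets with free vertices are exactly the $F_i=W_i\cup\{v_i\}$. Indeed, $W$ has no free vertex, since each $w\in W_i$ also lies in $F_i$.
\item The multiset $\{|F_i|-1\}$ is therefore an isomorphism invariant equal to the partition type.
\end{itemize}
A degenerate case must be excluded separately. When $m=1$ we get $K_t$. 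When $m\ge2$ but some block is a singleton $W_i=\{w\}$, the graph has a pendant edge at $w$, so it differs from $K_t$; its invariant $\{|F_i|-1\}$ is then computed as above. The only remaining collision to check is whether the partition $(t-1)$, giving $K_t$, could match some $m\ge2$ graph. This is impossible because the $m\ge2$ graphs are not complete: $v_1v_2$ is a non-edge. Hence distinct partitions of $t-1$ give non-isomorphic graphs, and the count is exactly $p(t-1)$.
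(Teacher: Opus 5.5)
Your proposal is correct and follows the paper's route. The characterization comes from Construction~\ref{construction}(2) and (3), and the count rests on two facts: set partitions of $W$ of the same type give isomorphic graphs, and distinct integer partitions of $t-1$ give non-isomorphic ones. You supply details the paper leaves implicit: the $m=1$ case via Dirac's theorem, the fact that the edge count forces $|W|=t-1$, and the isomorphism invariant given by the sizes of the facets of $\Delta(H)$ that have a free vertex. That invariant already yields $(t-1)$ for $K_t$, whose only facet has size $t$, so your separate collision check for the complete graph is harmless but unnecessary.
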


\begin{proof}
   The first part is an immediate consequence of Construction~\ref{construction}~(2) and~(3). For the second part, we use the notation of Construction~\ref{construction}. Note that each pair of different partitions of the integer $|W|$ result in two non-isomorphic bi-Cohen--Macaulay chordal graphs. On the other hand, different partitions of the set $W$ which correspond to the same integer partition for $|W|$ provide isomorphic graphs in Construction~\ref{construction}. Thus, we get exactly $p(t-1)$ non--isomorphic bi--Cohen--Macaulay chordal graphs with ${t \choose 2}$ edges. \hspace{9cm}     
\end{proof}

\begin{Example}\label{biCM t=4}
 Given $t=4$, we have $p(t-1)=p(3)=3$, since $3, 2+1, 1+1+1$ are the integer partitions of~3. Thus, there are exactly three bi--Cohen--Macaulay chordal graphs with ${4\choose 2}=6$ edges by Corollary~\ref{biCM chordal-2}. These three graphs are shown in Figure~\ref{badgraphs} as $G_1, G_2$ and $G_4$.   
\end{Example}  

We conclude the present section by determining prime quadratic squarefree binomial ideals $I$ with a squarefree quadratic initial ideal and linear resolution such that $\mu(I)=6$ up to their initial ideals. In the following, a cycle of length $\ell$ is also called an $\ell$-cycle.  


\begin{Theorem}\label{6 generators}
Let $I$ be an ideal in a polynomial ring $S$ minimally generated by six quadratic squarefree binomials. Suppose that (i) $I$ is prime, (ii) $I$ has a squarefree quadratic initial ideal with respect to a monomial order~$<$ and (iii) $I$ has linear resolution. Let 
\[
\ini_<(I)=I(G) \quad \text{and} \quad \ini_{<_{lex}}(J)=I(H)
\] 
where $G$ and $H$ are finite simple graphs with no isolated vertices and $J\subset R=K[x_i,y_i: i=1,\ldots,4]$ is the determinantal ideal of the form~(\ref{determinantal}) and $<_{lex}$ is the lexicographic order induced by 
$x_1 > x_2 > x_3 > x_4 > y_1 > y_2 > y_3 > y_4$. Then $G$ and $H$ are isomorphic as finite graphs.    
\end{Theorem}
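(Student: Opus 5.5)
The graph $H$ can be read off directly. For $i<j$ the lex leading term of $x_iy_j-x_jy_i$ is $x_iy_j$, and the $2$-minors of a generic $2\times n$ matrix form a Gr\"obner basis for a diagonal order. Hence $E(H)=\{\{x_i,y_j\} : 1\le i<j\le 4\}$. This is a bipartite graph on $\{x_1,x_2,x_3\}\cup\{y_2,y_3,y_4\}$. Setting $u_i=x_i$ and $w_j=y_{j+1}$, it is exactly the graph in Theorem~\ref{biCM bipartite}(2) with $s=t=3$. So the task is to show that $G$ is this graph.

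By Lemma~\ref{initial ideal as edge ideal} and Lemma~\ref{equivalency of CM+linear}(3), $G$ is connected, co--chordal and bi--Cohen--Macaulay, and $|E(G)|=\mu(I)=6$. I split into two cases according to whether $G$ is bipartite.

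\emph{Case 1: $G$ is bipartite.} Theorem~\ref{biCM bipartite} gives $s=t$ and $|E(G)|=s(s+1)/2$. From $s(s+1)/2=6$ we get $s=3$, so $G\cong H$.

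\emph{Case 2: $G$ is not bipartite.} First I show that $G$ contains a triangle. Co--chordality forbids induced cycles $C_n$ with $n\ge 5$. The reason is that the complement of an induced subgraph of $G$ is an induced subgraph of $\overline{G}$. Moreover, $\overline{C_5}=C_5$, and for $n\ge 6$ the complement $\overline{C_n}$ contains an induced $4$-cycle, for instance on the vertices $1,2,4,5$. A shortest odd cycle in $G$ is induced, so it must be a triangle. Relabel this triangle as $x_1,x_2,x_3$; we are then in Setup~\ref{setup} (the hypotheses of the theorem give the setup's assumptions).

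Next I classify the candidates for $G$.

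\emph{Subcase 2a: $G$ is chordal.} Then $G$ is a bi--Cohen--Macaulay chordal graph with $\binom{4}{2}$ edges. By Corollary~\ref{biCM chordal-2} and Example~\ref{biCM t=4}, $G$ is isomorphic to $G_1$, $G_2$ or $G_4$.

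\emph{Subcase 2b: $G$ is not chordal.} Then $G$ has an induced $4$-cycle $C$, since longer induced cycles are excluded above. A triangle shares at most one edge with $C$, because $C$ has no chords. If it shares one edge, it contributes two new edges and one new vertex, which uses all $6$ edges; the result is the graph $G_3$. If it shares only a vertex or nothing, at least $4+3-0=7$ or $4+2+\dots$ edges would be needed, more precisely at least $3$ edges outside $C$, exceeding $6$.

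Finally, Lemmas~\ref{lemma:G1G2}, \ref{lemma:G_3} and~\ref{lemma:G_4} exclude $G_1,G_2,G_3,G_4$. So Case~2 is impossible, and $G\cong H$.

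The main obstacle is the exhaustive graph classification in Case~2, in particular verifying carefully that non-chordal co--chordal graphs with $6$ edges and a triangle are exactly $G_3$. I will handle it by the edge count above, using that $C$ has no chords. I will also double-check the claim about the lex Gr\"obner basis of $J$, citing the standard fact that $2$-minors of a generic matrix form a Gr\"obner basis for a diagonal order.
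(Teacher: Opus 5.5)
Your proposal is correct and follows essentially the same route as the paper. You identify $H$ as the bipartite bi--Cohen--Macaulay graph of Theorem~\ref{biCM bipartite} with $s=3$, use co--chordality to force a triangle when $G$ is not bipartite, and reduce to $G_1,G_2,G_4$ in the chordal case or $G_3$ in the non-chordal case. These are then excluded by Lemmas~\ref{lemma:G1G2}, \ref{lemma:G_3} and~\ref{lemma:G_4}.

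Your edge-count argument that $G_3$ is the only non-chordal candidate, and the computation $s(s+1)/2=6$, make explicit two steps the paper states without detail. The sentence treating a triangle that shares no edge with $C$ should be cleaned up: in that case the triangle has three edges outside $C$, so $|E(G)|\geq 7$.
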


\begin{proof}
 First, note that $J$ can be seen as the binomial edge ideal of a complete graph with $n$ vertices, and hence 
 \[
 \ini_{<_{lex}}(J)=(x_iy_j : 1 \leq i < j \leq 4)\subseteq R
 \]
 by \cite[Theorem~1.1]{HHHKR}. This implies that $\ini_{<_{lex}}(J)=I(H)$ where $H$ is the bipartite graph with the vertex set 
 \[
 V(H)=\{x_1,x_2,x_3\}\cup \{y_2,y_3,y_4\}
 \] 
 and the edge set 
 \[
 E(H)=\big{\{} \{x_i,y_j\} : 1\leq i<j\leq 4 \big{\}}.
 \]
 By relabeling the vertices of $H$ with setting $z_i=y_{i+1}$ for $i=1,2,3$, it is clear that $H$ is of the type described in Theorem~\ref{biCM bipartite}.  

 On the other hand, $\ini_<(I)=I(G)$ where $G$ is a finite simple graph $G$ whose vertex is contained in the set of variables of $S$. By Lemma~\ref{initial ideal as edge ideal}, $G$ is connected and co--chordal, $S/I(G)$ is Cohen--Macaulay and $I(G)$ has linear resolution, and hence $G$ is a bi--Cohen--Macaulay graph over $K$. By Lemma~\ref{equivalency of CM+linear}~(3), we have $|E(G)|=6$. 
 
 If $G$ is not bipartite, then it has an induced odd cycle of length~$\ell\leq 5$, since $G$ has only six edges. Since the complementary graph of a 5-cycle is not chordal and since $G$ is co--chordal, it follows that $G$ does not have any induced 5-cycle. Thus, $\ell=3$. If $G$ is chordal, then $G$ is isomorphic to one of the graphs $G_1$, $G_2$ or $G_4$ depicted in Figure~\ref{badgraphs}, by Example~\ref{biCM t=4}. Then, we get a contradiction to Lemma~\ref{lemma:G1G2} and Lemma~\ref{lemma:G_4}. Thus, $G$ is not chordal and also has an 
 induced 4-cycle. The only connected graph with at least one induced 3-cycle and at least one induced 4-cycle with six edges is the graph $G_3$ shown in Figure~\ref{badgraphs}. But, according to Lemma~\ref{lemma:G_3}, $G$ is not isomorphic to $G_3$. 
  
Therefore, $G$ is a bipartite graph. By Theorem~\ref{biCM bipartite}, $G$ is isomorphic to the bipartite graph $H$ described in the first part of the proof. Hence, we get the desired result. 
\, \, \, \, \, \, \, \, \, \, \, \, \, \,
\end{proof}

We would like to remark that by the virtue of Lemma~\ref{initial ideal as edge ideal}, Theorem~\ref{possible number of generators} and Corollary~\ref{biCM chordal-2}, there are at least $p(t-1)$ potential squarefree quadratic initial ideals for the favorite class of binomial ideals $I$ in the present section with $\mu(I)={t\choose 2}$ where $t\geq 4$. Here we investigated the case $t=4$. Considering higher values of $t$ will be of great interest as well.

\section*{Acknowledgment}
Ayesha Asloob Qureshi was supported by Scientific and Technological Research Council of Turkey T\"UB\.{I}TAK under the Grant No: 124F113. Sara Saeedi Madani was in part supported by a grant from IPM (No. 1404130019).

\end{document}